\newtheorem{theorem}{Theorem}[section]
\newtheorem{lemma}[theorem]{Lemma}
\newtheorem{corollary}[theorem]{Corollary}
\newtheorem{proposition}[theorem]{Proposition}
\theoremstyle{definition}
\newtheorem{definition}[theorem]{{Definition}}
\newtheorem{example}[theorem]{Example}
\newtheorem*{chunk*}{}
\numberwithin{equation}{section}
\theoremstyle{remark}
\newtheorem{remark}[theorem]{Remark}
\def\ann{\operatorname{ann}}
\def\Ass{\operatorname{Ass}}
\def\diam{\operatorname{diam}}
\def\p{\mathfrak{p}}
\begin{document}
\subjclass{Primary 13A15, 13A99, 05C12}
\title[equivalence classes of zero divisors]
{A zero divisor graph determined by equivalence classes of zero divisors}

\author{Sandra Spiroff}
\address{Department of Mathematics, University of Mississippi, Oxford, MS  38677}
\email{spiroff@olemiss.edu}

\author{Cameron Wickham}
\address{Department of Mathematics, Missouri State University, Springfield, MO  65897}
\email{cwickham@missouristate.edu}

\begin{abstract} We study the zero divisor graph determined by equivalence classes of zero divisors of a commutative Noetherian ring $R$.  We demonstrate how to recover information about $R$ from this structure.  In particular, we determine how to identify associated primes from the graph.
\end{abstract}
\maketitle

\section*{Introduction} Beck first introduced the notion of a zero divisor graph of a ring $R$ in 1988 \cite{B} from the point of view of colorings.  Since then, others have studied and modified these graphs, whose vertices are the zero divisors of $R$, and found various properties to hold.  Inspired by ideas from S. Mulay in \cite[\S 3]{M}, we introduce the {\it graph of equivalence classes of zero-divisors of a ring $R$}, which is constructed from classes of zero divisors determined by annihilator ideals, rather than individual zero divisors themselves.  It will be denoted by $\Gamma_E(R$).

This graph has some advantages over the earlier zero divisor graphs $\Gamma(R)$ in \cite{B}, \cite{AL}, \cite{AM}, \cite{AMY}, or subsequent zero divisor graphs determined by an ideal of $R$ in \cite{MPY}, \cite{R}.  In many cases $\Gamma_E(R)$ is finite when $\Gamma(R)$ is infinite.
For example, if $S = \mathbb Z[X,Y]/(X^3, XY)$, then $\Gamma(S$) is an infinite graph, while $\Gamma_E(S$) has only four vertices.  To be specific, although $X^2, 2X^2, 3X^2, \dots$, are distinct zero divisors, they all have the same annihilator; they
are represented by a single vertex in $\Gamma_E(S$).  In addition, there are no complete $\Gamma_E(R$) graphs with three or more vertices since the graph would collapse to a single point.  These are two ways in which $\Gamma_E(R$) represents a more succinct description of the \lq\lq zero divisor activity" in $R$.

Another important aspect of graphs of equivalence classes of zero divisors is the connection to associated primes of the ring.  For example, in the ring $S$ above, the annihilator of $X^2$ is an associated prime.  In general, all of the associated primes of a ring $R$ correspond to distinct vertices in $\Gamma_E(R)$.  Moreover, every vertex in a graph either corresponds to an associated prime or is connected to one.
The study of the structure of associated primes in $\Gamma_E(R$) is one of our main motivations.

In section one, we compare and contrast $\Gamma_E(R$) with the more familiar $\Gamma(R$) defined by D.~Anderson and P.~Livingston \cite{AL}.  In section two, we consider infinite graphs and fan graphs and answer the question of whether or not the Noetherian condition on $R$ is enough to force $\Gamma_E(R)$ to be finite.  Section three is devoted to the relation between the associated primes of $R$ and the vertices of $\Gamma_E(R)$.  In particular, we demonstrate how to identify some elements of $\Ass(R)$.

Throughout, $R$ will denote a commutative Noetherian ring with unity, and all graphs are simple graphs in the sense that there are no loops or double edges.

\section{Definitions and Basic Results}  Let $Z^*(R)$ denote the zero divisors of $R$ and
$Z(R)=Z^*(R) \cup \{0\}$.  For $x,y\in R$, we say that $x \sim y$
if and only if $\ann(x) = \ann(y)$. As noted in \cite{M}, $\sim$
is an equivalence relation.  Furthermore, if $x_1\sim x_2$ and
$x_1y=0$, then $y\in\ann(x_1)=\ann(x_2)$ and hence $x_2y=0$.  It
follows that multiplication is well-defined on the equivalence classes of $\sim$;
that is, if $[x]$ denotes the class of $x$, then the product $[x]
\cdot [y] = [xy]$ makes sense.  Note that $[0]=\{0\}$ and
$[1]=R-Z(R)$; the other equivalence classes form a partition of
$Z^*(R)$.

\begin{definition}  The {\it graph of equivalence classes of zero-divisors of a ring $R$}, denoted $\Gamma_E(R$), is the graph associated to $R$ whose vertices are the classes of elements in $Z^*(R)$, and with each pair of distinct classes $[x], [y]$ joined by an edge if and only if $[x] \cdot [y]=0$.
\end{definition}

Recall that a prime ideal $\p$ of $R$ is an \textit{associated
prime} if $\p = \ann(y)$ for some $y \in R$.  The set of
associated primes is denoted $\Ass(R)$; it is well known that for a Noetherian ring $R$, $\Ass(R)$ is finite, and any maximal element of the family of ideals $\mathfrak F = \{\ann(x) |
0 \neq x \in R \}$ is an associated prime.  There is a natural injective map from $\Ass(R)$ to the
vertex set of $\Gamma_E(R)$ given by $\p \mapsto [y]$ where $\p = \ann(y)$.
As a result, we will slightly abuse terminology and refer to
$[y]$ as an associated prime.

\begin{lemma}  \label{assedge} Any two distinct elements of $\Ass(R)$ are connected by an
edge. Furthermore, every vertex $[v]$ of $\Gamma_E(R)$ is either an
associated prime or adjacent to an associated prime maximal in $\mathfrak F$.
\end{lemma}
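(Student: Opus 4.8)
The plan is to treat the two assertions in turn, in each case working directly with annihilator ideals.

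For the first assertion, take distinct associated primes $\p_1 = \ann(x)$ and $\p_2 = \ann(y)$ (so $x,y \neq 0$ and both ideals are prime). Since $\ann(x) \neq \ann(y)$ the classes $[x], [y]$ are already distinct, so it is enough to prove $[x]\cdot[y] = [xy] = 0$, i.e.\ $xy = 0$. I would argue by contradiction: if $xy \neq 0$ then $y \notin \ann(x) = \p_1$ and $x \notin \ann(y) = \p_2$. The crucial observation is then that $\ann(xy) = \ann(x)$: the inclusion $\ann(x) \subseteq \ann(xy)$ is immediate, and conversely $z\,xy = 0$ gives $zy \in \ann(x) = \p_1$, hence $z \in \p_1 = \ann(x)$ since $\p_1$ is prime and $y \notin \p_1$. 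The symmetric argument gives $\ann(xy) = \ann(y)$, so $\p_1 = \p_2$, a contradiction. Hence $xy = 0$ and there is an edge between $[x]$ and $[y]$.

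For the second assertion, let $[v]$ be an arbitrary vertex, so $v \neq 0$ is a zero divisor; pick $w_0 \neq 0$ with $w_0 v = 0$. Then $\ann(w_0) \in \mathfrak F$, and since $R$ is Noetherian I can choose $w$ with $\p := \ann(w)$ maximal in $\mathfrak F$ and $\ann(w_0) \subseteq \ann(w)$; by the fact recalled just before the lemma, $\p$ is an associated prime that is maximal in $\mathfrak F$. Since $v \in \ann(w_0) \subseteq \ann(w)$ we get $vw = 0$, i.e.\ $[v]\cdot[w] = 0$. If $[v] \neq [w]$ this says $[v]$ is adjacent to the associated prime $[w]$, as wanted. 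If $[v] = [w]$, then $\ann(v) = \ann(w) = \p$, so $[v]$ is itself (the vertex corresponding to) the associated prime $\p$. In either case the conclusion holds.

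I do not anticipate a genuine obstacle. The two points requiring a little care are: in the first part, recalling that an edge requires the two classes to be \emph{distinct} and not merely to have product $0$, so the hypothesis $\p_1 \neq \p_2$ must actually be used; and in the second part, producing the right starting element---using that $v$ is a zero divisor to get $w_0$, rather than trying to compare $\ann(v)$ itself with a maximal annihilator---and then cleanly disposing of the degenerate case $[v] = [w]$ by noting it makes $[v]$ an associated prime. The Noetherian hypothesis is used only to guarantee that every member of $\mathfrak F$ lies below a maximal one.
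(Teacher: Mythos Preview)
Your proof is correct in both parts. The second assertion is handled essentially as in the paper: the paper simply asserts that a zero divisor $v$ lies in $\ann(z)$ for some $z$ maximal in $\mathfrak F$, and you unpack this step by first producing $w_0$ with $w_0v=0$ and then climbing to a maximal annihilator, also explicitly disposing of the case $[v]=[w]$. For the first assertion your route differs from the paper's. The paper argues directly: without loss of generality take $r\in\p_1\setminus\p_2$; then $rx=0\in\p_2$ and $r\notin\p_2$ force $x\in\p_2=\ann(y)$, so $xy=0$. You instead argue by contradiction, showing that $xy\neq 0$ forces $\ann(xy)=\ann(x)$ and symmetrically $\ann(xy)=\ann(y)$. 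Both arguments are short and use only primality; the paper's is a touch more economical, while yours has the minor bonus of isolating the general fact that $\ann(xy)=\ann(x)$ whenever $\ann(x)$ is prime and $y\notin\ann(x)$.
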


\begin{proof}  The proof of the first statement is essentially the same as in \cite[Lemma 2.1]{AMY}.  If $\frak p = \ann(x)$ and $\frak q = \ann(y)$ are primes, then one can assume that there is an element $r \in \frak p \backslash \frak q$.  Since $rx=0 \in \frak q$, $x \in \ann(y)$, and hence $[x] \cdot [y] = 0$.  Next, suppose $[v] \in \Gamma_E(R)$ is not an associated prime.  Since $v$ is
a zero divisor, $v \in \ann(z)$ for some $z$ maximal in $\frak F$; thus, there is an edge between $[v]$ and [z].
\end{proof}

\begin{example}  \label{house}  The associated primes of $R = \mathbb Z_4 \times \mathbb Z_4$ are $[(2,0)]$ and $[(0,2)]$.

\xymatrix{
& & & [(2,2)] \ar@{-}[dl] \ar@{-}[dr]  \\
& [(2, 1)] \ar@{-}[r] & [(2,0)] \ar@{-}[rr] \ar@{-}[d] & &  [(0,2)] \ar@{-}[d] \ar@{-}[r] & [(1, 2)] \\
 & & [(0,1)] \ar@{-}[rr] & &  [(1, 0)] & & \Gamma_E(R) }
\end{example}

The {\it degree} of a vertex $v$ in a graph,
denoted $\deg v$, is the number of edges incident to $v$.  When $\deg v = 1$, we call $v$ a {\it leaf}.  A {\it path of length $n$} between two vertices $u$ and $w$ is a sequence
of distinct vertices $v_i$ of the form $u=v_0-v_1- \ldots- v_n=w$
such that $v_{i-1}-v_i$ is an edge for each $i=1,\ldots, n$. The
{\it distance} between a pair of vertices is the length of the shortest
path between them; if no path exists the distance is infinite. The
{\it diameter} of a graph is the greatest distance between any two
distinct vertices.

\begin{proposition}  \label{connected} The graph $\Gamma_E(R$) is connected and $\diam \Gamma_E(R) \leq 3$.
\end{proposition}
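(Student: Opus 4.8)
The plan is to lean entirely on Lemma~\ref{assedge}, which tells us two things: the associated primes of $R$ form a clique in $\Gamma_E(R)$, and every vertex sits at distance at most $1$ from that clique. Granting this, connectedness and the bound $\diam\Gamma_E(R)\le 3$ come out of a short relay argument — from an arbitrary vertex $[x]$, step to a neighbouring associated prime, cross the clique of associated primes, and step down to $[y]$ — producing a walk of length at most $1+1+1=3$ between any two distinct vertices.

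In detail, I would first clear away the trivial cases: if $\Gamma_E(R)$ has at most one vertex there is nothing to prove, and if two given distinct vertices $[x],[y]$ are adjacent their distance is $1$. So suppose $[x]\ne[y]$ are non-adjacent. By Lemma~\ref{assedge}, either $[x]$ is itself an associated prime or it is adjacent to one; in either case fix an associated prime $[p]$ with $[p]=[x]$ or with $[x]-[p]$ an edge, and likewise fix an associated prime $[q]$ with $[q]=[y]$ or with $[y]-[q]$ an edge. (The lemma produces associated primes maximal in $\mathfrak F$, but only the ``associated prime'' part is needed here.)

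Now if $[p]=[q]$, then $[x]-[p]-[y]$ is a walk of length $\le 2$; and if $[p]\ne[q]$, then by the first assertion of Lemma~\ref{assedge} the edge $[p]-[q]$ exists, so $[x]-[p]-[q]-[y]$ is a walk of length $\le 3$. In either case, deleting repeated vertices from this walk leaves a genuine path from $[x]$ to $[y]$ of length at most $3$ (any walk between distinct vertices contains such a path). Hence every pair of vertices is joined by a path of length $\le 3$, which gives both connectedness and $\diam\Gamma_E(R)\le 3$.

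The only delicate point — really careful bookkeeping rather than a genuine obstacle — is checking that after deleting repetitions one does not end up forced into a longer detour, i.e.\ that the relay vertices $[p]$ and $[q]$ can be chosen so that the coincidences among $[x],[p],[q],[y]$ are harmless. For instance, $[x]=[q]$ would make $[x]$ an associated prime, in which case $[p]$ may be taken equal to $[x]$ and the walk shortens; the assumed non-adjacency of $[x]$ and $[y]$ rules out the remaining bad coincidences. Invoking the standard fact that a walk contains a path of no greater length makes this step routine.
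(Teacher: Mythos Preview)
Your argument is correct and is essentially the same as the paper's: both relay from $[x]$ to an associated prime, across the associated-prime clique, and down to $[y]$, invoking Lemma~\ref{assedge} for both steps. Your version is more careful about the bookkeeping (walks versus paths, the various coincidences among $[x],[p],[q],[y]$), but the underlying idea is identical.
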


\begin{proof}  Let $[x]$ and $[y]$ be two non-adjacent vertices.  At worst, neither $[x]$ nor $[y]$ is an associated prime.  By Lemma \ref{assedge}, there are associated primes $[v_1]$ and $[v_2]$ adjacent to $[x]$ and $[y]$, respectively, providing a path $[x]-[v_1]-[v_2]-[y]$,
where $[v_1]$ may or may not equal $[v_2]$.  In any case, $\Gamma_E(R$) is connected and
$\diam \Gamma_E(R) \leq 3$.
\end{proof}

Connectivity and a restricted diameter are two similarities between $\Gamma_E(R)$ and $\Gamma(R)$, \cite{AL}.  One important difference between $\Gamma_E(R)$ and $\Gamma(R)$ lies in the study of complete graphs.  The idea in the next proposition is that complete zero divisor graphs determined by equivalence classes, with the exception of graphs consisting of a single edge (e.g.,  $\Gamma_E(\mathbb Z_{pq})$; $\Gamma_E(\mathbb Z_{p^3})$), all collapse to a single point.  Each vertex in $\Gamma_E(R)$ is  representative of a distinct class of zero divisor activity in $R$.

\begin{proposition}  \label{incomplete} Let $R$ be such that $\Gamma_E(R$) has at least three vertices.  Then $\Gamma_E(R)$ is not complete.
\end{proposition}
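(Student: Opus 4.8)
The plan is to argue by contradiction. Suppose $\Gamma_E(R)$ is complete and has three distinct vertices $[x]$, $[y]$, $[z]$; then completeness gives $xy = yz = zx = 0$, and the goal is to produce two vertices that are \emph{not} joined by an edge.

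The first move is a reduction that forces squares to vanish. Given two distinct vertices $[u]$, $[w]$ joined by an edge and possessing a common neighbor $[t]$, the element $u+w$ is nonzero (otherwise $\ann(u) = \ann(-u) = \ann(w)$) and satisfies $(u+w)t = 0$ with $t \neq 0$, so $u+w \in Z^*(R)$ and $[u+w]$ is again a vertex; since $[u]\neq[w]$ it differs from at least one of them, say $[u+w]\neq[u]$, and then completeness forces $(u+w)u = 0$, i.e.\ $u^2 = 0$ (using $uw=0$). Applying this to each of the three pairs among $\{x,y,z\}$ --- the remaining vertex serving as the common neighbor each time --- shows that in every pair at least one of the two squares is $0$; since any two vertices with nonzero square would themselves form such a pair, at least two of $x^2,y^2,z^2$ must vanish. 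After relabeling, I may assume $x^2 = y^2 = xy = 0$.

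Now for the contradiction: since $[x]\neq[y]$ we have $\ann(x)\neq\ann(y)$, so after possibly interchanging $x$ and $y$ (which preserves $x^2=y^2=0$) there is an element $a$ with $ax=0$ but $ay\neq 0$. Then $a+x\neq 0$ (else $ay=-xy=0$), and $(a+x)x = ax+x^2 = 0$ with $x\neq 0$, so $[a+x]$ is a vertex of $\Gamma_E(R)$; but $(a+x)y = ay+xy = ay\neq 0$, so $[a+x]$ is not adjacent to $[y]$. Completeness then forces $[a+x]=[y]$, i.e.\ $\ann(a+x)=\ann(y)$. Since $y^2=0$ we have $y\in\ann(y)=\ann(a+x)$, whence $(a+x)y = 0$, i.e.\ $ay=0$ --- contradicting the choice of $a$.

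The step I expect to be the main obstacle is the first one. One has to make sure the sums formed are genuine zero divisors rather than $0$ or units (this is where the hypothesis of a third vertex is really used, and it is also why the statement fails for a single edge such as $\Gamma_E(\mathbb Z_{pq})$), and one has to accommodate the case where $[u+w]$ collapses onto an already-present vertex --- which is precisely why only the weak conclusion ``$u^2=0$ or $w^2=0$'' can be salvaged from each pair, so that the counting step is needed before the clean contradiction of the last paragraph can be set up. Once $x^2 = y^2 = 0$ is in hand, the remainder is a short computation.
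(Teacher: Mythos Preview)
Your argument is correct, but it is considerably more intricate than the paper's. The paper exploits the Noetherian hypothesis directly: among the vertices one may take $[z]$ with $\ann(z)$ maximal in $\mathfrak{F}=\{\ann(r):0\neq r\in R\}$; since $[x]\neq[z]\neq[y]$ forces $\ann(z)\not\subseteq\ann(x)$ and $\ann(z)\not\subseteq\ann(y)$, two-ideal avoidance yields $w\in\ann(z)\setminus(\ann(x)\cup\ann(y))$, and then $[w]$ is a vertex not adjacent to $[x]$ or $[y]$, so completeness would force $[w]=[x]=[y]$, a contradiction. This is a two-line proof once the maximal annihilator is in hand. Your route, by contrast, never invokes maximal annihilators or associated primes at all: you manufacture the contradiction purely from element-level manipulations with sums, first squeezing out $x^2=y^2=0$ and then producing a vertex not adjacent to $[y]$. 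The payoff is that your proof is self-contained and would survive in a setting where maximal elements of $\mathfrak{F}$ are not guaranteed; the cost is the extra bookkeeping (the ``at least one square vanishes in each pair'' step and the case analysis on $[u+w]$ collapsing). Both are valid, but in the Noetherian setting the paper's argument is the cleaner one.
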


\begin{proof}  Suppose $\Gamma_E(R$) is a complete graph and $[x], [y],$ and $[z]$ are three distinct vertices.  We may assume that $\ann(z)$ is a maximal element of $\frak F$ and hence is not contained in either $\ann(x)$ or $\ann(y)$.  Then there is an element $w \in \ann(z)$ such that $w \notin \ann(x) \cup \ann(y)$.   This immediately contradicts the assumption that the graph is complete.
\end{proof}

\begin{corollary}  \label{three} There is only one graph $\Gamma_E(R$) with exactly three vertices that can be realized as a graph of equivalence classes of zero divisors for some ring $R$.
\end{corollary}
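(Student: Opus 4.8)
The plan is to use Propositions \ref{connected} and \ref{incomplete} to cut the list of candidate graphs on three vertices down to a single one, and then to produce a ring realizing that graph. First I would recall that, up to isomorphism, there are exactly four simple graphs on three vertices: the graph with no edges, the graph consisting of a single edge together with an isolated vertex, the path $[a]-[b]-[c]$, and the triangle. By Proposition \ref{connected}, $\Gamma_E(R)$ is connected, so the first two graphs cannot occur, since each has an isolated vertex. By Proposition \ref{incomplete}, a graph $\Gamma_E(R)$ with three vertices is not complete, so the triangle cannot occur either. Hence the only graph on three vertices that can possibly be realized as some $\Gamma_E(R)$ is the path on three vertices.

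It then remains to verify that this path is in fact realized. I would take $R=\mathbb{Z}_{16}$ (more generally $R=\mathbb{Z}_{p^4}$ for a prime $p$) and compute the equivalence classes of zero divisors directly. Sorting the seven nonzero zero divisors of $\mathbb{Z}_{16}$ by annihilator gives the three classes $\{2,6,10,14\}$, $\{4,12\}$, and $\{8\}$, with $\ann(2)=(8)$, $\ann(4)=(4)$, and $\ann(8)=(2)$. Since $2\cdot 4 = 8 \neq 0$ while $2 \cdot 8 = 0$ and $4 \cdot 8 = 0$, the graph $\Gamma_E(\mathbb{Z}_{16})$ is precisely the path $[2]-[8]-[4]$. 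Combined with the previous paragraph, this shows that the path on three vertices is the unique graph on three vertices realizable as a graph of equivalence classes of zero divisors.

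The argument carries no real obstacle: the reduction to a single candidate is immediate from the two cited propositions, and the only point requiring a little care is the class count in the example, namely checking that the annihilator partition of the nonzero zero divisors of $\mathbb{Z}_{16}$ yields three classes and not more. If a coordinate-ring example is preferred, $\mathbb{Z}_2[x]/(x^4)$ works in exactly the same way with $2$ replaced by $x$.
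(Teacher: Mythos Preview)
Your proposal is correct and follows essentially the same approach as the paper: use connectivity and Proposition~\ref{incomplete} to single out the path on three vertices, then exhibit a ring realizing it. The only difference is the choice of example---the paper uses $R=\mathbb{Z}_4[X,Y]/(X^2,XY,2X)$, while your choice of $\mathbb{Z}_{16}$ (or $\mathbb{Z}_{p^4}$) is arguably cleaner and equally valid.
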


\begin{proof}  Since every graph is connected, by Proposition \ref{incomplete}, we need only note that the graph of $R = \mathbb Z_4[X,Y]/(X^2, XY, 2X)$, for example, is $[2]$ --- $[x]$ --- $[y]$, where $x$ and $y$ denote the images of $X$ and $Y$, respectively.
\end{proof}

The simplification, or collapse, of complete $\Gamma_E(R$) graphs applies also to complete $r$-partite diagrams, with the exclusion of fan graphs $K_{n,1}$, like the one above.  (The latter are considered in the next section.)  A graph is {\it complete bipartite} if there is a partition of the vertices into two subsets $\{u_i\}$ and $\{v_j\}$ such that $u_iv_j = 0$ for all pairs $i,j$, but no two elements of the same subset annihilate one another.  More generally, a graph is {\it complete $r$-partite} if the vertices can be partitioned into $r$ distinct subsets such that each element of a subset is connected to every element not in the same subset, but no two elements of the same subset are connected.

\begin{proposition}  \label{bipartite} Let $R$ be a ring such that $\Gamma_E(R)$ is complete $r$-partite.  Then $r=2$ and $\Gamma_E(R) = K_{n,1}$ for some $n\geq 1$.
\end{proposition}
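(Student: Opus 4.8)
The plan is to argue by contradiction, assuming $\Gamma_E(R)$ is complete $r$-partite with $r \geq 3$, and derive a contradiction from the structure of annihilator ideals. First I would set up notation: suppose the vertices are partitioned into parts $V_1, \dots, V_r$ with $r \geq 3$, and pick representatives $x \in V_1$, $y \in V_2$, $z \in V_3$ from three distinct parts. By Lemma \ref{assedge} (or rather the maximality remark preceding it), among these I can arrange that one of the annihilators, say $\ann(z)$, is maximal in $\mathfrak{F}$; more carefully, I would invoke the fact that every vertex is adjacent to an associated prime maximal in $\mathfrak{F}$, and track which part that associated prime lies in, since associated primes in different parts would be forced to be non-adjacent, contradicting Lemma \ref{assedge}. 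This already suggests that all associated primes maximal in $\mathfrak{F}$ lie in a single part.

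The key step is then the following. Since $x$ and $y$ lie in different parts, $[x]\cdot[y] = 0$, so $x \in \ann(y)$ and $y \in \ann(x)$. I want to produce a vertex that should be adjacent to some vertex in its own part, contradicting the partition. Here is the mechanism I would use, essentially the idea already appearing in the proof of Proposition \ref{incomplete}: if $[z]$ has $\ann(z)$ maximal in $\mathfrak{F}$, then $\ann(z)$ is an associated prime, hence prime, and it cannot contain both $\ann(x)$ and $\ann(y)$ unless those equal it — so assuming $x, y, z$ are genuinely in three different parts, there is $w \in \ann(z)$ with $w \notin \ann(x) \cup \ann(y)$. Then $wx \neq 0$ and $wy \neq 0$ but $wz = 0$, so $[w]$ is adjacent to $[z]$ and hence $[w]$ lies in a part different from the part containing $[z]$; meanwhile $[w]$ is not adjacent to $[x]$ or $[y]$, forcing $[w]$ to lie in the same part as both $[x]$ and $[y]$ — but $[x]$ and $[y]$ are in different parts, so this is impossible unless $[w] = [x]$ or $[w] = [y]$. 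Ruling out those degenerate cases ($w \in \ann(x) \cup \ann(y)$ would be needed, contradicting the choice of $w$) completes the contradiction, so $r \leq 2$, i.e. $r = 2$.

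Finally, with $r = 2$ established, I would show $\Gamma_E(R) = K_{n,1}$, i.e. one of the two parts is a singleton. Suppose both parts $V_1 = \{u_1, \dots\}$ and $V_2 = \{v_1, \dots\}$ have at least two vertices. Pick $u_1, u_2 \in V_1$ and $v_1, v_2 \in V_2$. Then $u_1 v_1 = u_1 v_2 = u_2 v_1 = 0$, so $v_1, v_2 \in \ann(u_1)$ and hence $v_1 + v_2 \in \ann(u_1)$; similarly I would examine $\ann(v_1)$, which contains $u_1, u_2$. The point is to find an element whose annihilator class collides with, or forces an edge within, one of the parts — for instance considering $[u_1 + u_2]$ or $[v_1 + v_2]$, or an element like $u_1 v_1' $ type products, and checking that the resulting vertex must be adjacent to something in its own part. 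Actually the cleanest route: $\ann(u_1) \supseteq V_2$ and $\ann(u_2) \supseteq V_2$, and since no two elements of $V_1$ are adjacent, no element of $V_1$ lies in $\ann(u_1)$; combined with the observation that elements outside $Z(R)$ annihilate nothing, one shows $\ann(u_1) = \ann(u_2)$ forcing $[u_1] = [u_2]$, a contradiction — so $|V_1| = 1$ or the symmetric conclusion for $V_2$, giving $K_{n,1}$.

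The main obstacle I anticipate is the last part: showing that $r=2$ with both parts nontrivial is impossible requires carefully showing two distinct vertices in the same part actually have equal annihilators. One must be careful that $\ann(u_1)$ could a priori contain elements of $V_1$ only if those are adjacent to $u_1$, which the partition forbids, but it could contain $0$ and non-zero-divisors — so the argument that $\ann(u_1) \subseteq Z(R) \cup \{0\}$ and that it is \emph{exactly} $\{0\} \cup V_2 \cup (\text{stuff})$ needs the structure of the partition, namely that $V_1 \cup V_2 = Z^*(R)$. I would need to verify $\ann(u_1) \setminus \{0\} \subseteq V_2$: any nonzero $w$ with $u_1 w = 0$ is a zero divisor, so $[w] \in V_1 \cup V_2$; if $[w] \in V_1$ then $[w]$ is adjacent to $[u_1]$ unless $[w]=[u_1]$, contradicting the $r$-partite condition, so $[w] \in V_2$ or $w \sim u_1$ — and $u_1 \cdot u_1 = 0$ would put $u_1^2 = 0$, which needs separate handling but does not obstruct the conclusion. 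Modulo this bookkeeping, $\ann(u_1) = \ann(u_2)$ follows and the proof closes.
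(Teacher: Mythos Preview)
Your argument for ruling out $r\geq 3$ is correct and in fact cleaner than the paper's: you recycle the mechanism of Proposition~\ref{incomplete} directly, whereas the paper first deduces $u_1^2=0$ for a pair of non-adjacent vertices $[u_1],[u_2]$ and then derives contradictions from sums like $[u_1+v]$ and $[v+w]$. (Your aside that associated primes in different parts ``would be forced to be non-adjacent'' is backwards---vertices in different parts \emph{are} adjacent in a complete $r$-partite graph---but this does not damage the argument: all you need is one vertex $[z]$ with $\ann(z)$ maximal in $\mathfrak F$, and then $x,y$ chosen from any two of the remaining parts.)

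The second step, however, has a real gap. You propose to show $\ann(u_1)=\ann(u_2)$ for distinct $[u_1],[u_2]\in V_1$ by arguing that every nonzero $w\in\ann(u_1)$ has $[w]\in V_2$. But the exception you flag---the possibility $u_1^2=0$---is not bookkeeping to be dispatched later; it is exactly what always occurs, and it kills the conclusion you are after. Indeed, since $[u_1]$ and $[u_2]$ have identical neighbor sets, any $z\in\ann(u_1)\setminus\ann(u_2)$ must satisfy $[z]=[u_1]$, forcing $u_1^2=0$. Then $u_1\in\ann(u_1)$, while $u_1\notin\ann(u_2)$ because $[u_1]$ and $[u_2]$ are non-adjacent; hence $\ann(u_1)\neq\ann(u_2)$ outright, and your ``cleanest route'' cannot close. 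The paper's fix is to exploit the nilpotency on both sides: once $u_1^2=0=v_1^2$ for $[u_1]\in V_1$ and $[v_1]\in V_2$, the element $u_1+v_1$ is annihilated by both $u_1$ and $v_1$, so $[u_1+v_1]$ would have to lie in both $V_2$ and $V_1$. Your earlier instinct to examine sums was on the right track; the annihilator-equality approach is not salvageable.
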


\begin{proof}  First suppose $\Gamma_E(R) = K_{n_1, n_2, \dots, n_r}$ for some $r\geq 3$.  Proposition \ref{incomplete} implies that not all $n_i = 1$; without loss of generality assume $n_1>1$ and let $[u_1], [u_2]$ be two non-adjacent vertices.  Since $[u_1]\neq [u_2]$, there exists $z\in \ann(u_1) \backslash \ann(u_2)$.  But $[u_1]$ and $[u_2]$ have the same set of neighbors, hence $\ann(u_1) \backslash \{u_1\} = \ann(u_2) \backslash \{u_2\}$, forcing $z=u_1$ and hence $u_1^2=0$.  Let $[v]$ and $[w]$ be two adjacent vertices of $\Gamma_E(R)$ both of which are adjacent to $[u_1]$ and $[u_2]$. If $v^2=0$, then $[u_1+v]$ is adjacent to $[u_1]$ but not $[u_2]$, a contradiction.  A similar contradiction is reached if $w^2=0$.  If $v^2\neq 0$ and $w^2\neq 0$, then $[v+w]$ is not adjacent to either $[v]$ or $[w]$, a contradiction.  Thus $r=2$.

If $n_1>1$ and $n_2>1$, then there are two pairs of non-adjacent vertices, say $[u_1], [u_2]$ and $[v_1], [v_2]$.   Arguing as above, we have $u_1^2=0=v_1^2$.  But then $[u_1+v_1]$ is adjacent to both $[u_1]$ and $[v_1]$, a contradiction.  Thus either $n_1=1$ or $n_2=1$ and hence $\Gamma_E(R) = K_{n,1}$.
\end{proof}

A second look at the above results allows us to deduce some facts about cycle graphs, which are $n$-gons.  An immediate consequence of Proposition \ref{connected} is that there are no cycle graphs with eight or more vertices.  Likewise, Proposition \ref{bipartite} tells us that there are no 3- or 4-cycle graphs, i.e., $K_{1, 1, 1}$ or $K_{2,2}$, respectively.

\begin{proposition}  \label{nocycle} For any $R$, $\Gamma_E(R$) is not a cycle graph.
\end{proposition}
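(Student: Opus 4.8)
The plan is to use both halves of Lemma~\ref{assedge} together with the elementary fact that a cycle $C_n$ with $n\ge 4$ contains no triangle. By the remarks preceding the proposition it would suffice to rule out $\Gamma_E(R)=C_5,C_6,C_7$, but in fact the argument I have in mind handles every cycle length at once.

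Suppose $\Gamma_E(R)=C_n$, necessarily with $n\ge 3$ since $C_1$ and $C_2$ are not simple graphs, and let $A$ be the set of vertices of $\Gamma_E(R)$ that are associated primes. Since $R$ is Noetherian and has zero divisors, $\mathfrak F$ has a maximal element, which is an associated prime, so $A\ne\emptyset$. By the first statement of Lemma~\ref{assedge}, $A$ is a clique in $\Gamma_E(R)$; because $C_n$ has no triangle when $n\ge 4$, this forces $|A|\le 2$, and if $|A|=2$ its two vertices are adjacent.

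The key step I would record next is that every vertex lies in the closed neighborhood of some vertex of $A$: an associated prime lies in its own closed neighborhood, and by the second statement of Lemma~\ref{assedge} any other vertex is adjacent to some associated prime (one maximal in $\mathfrak F$), hence lies in that vertex's closed neighborhood. Thus $V(\Gamma_E(R))=\bigcup_{[a]\in A}N[a]$, where $N[a]$ denotes the closed neighborhood. In $C_n$ with $n\ge 4$ a single closed neighborhood has exactly three vertices, and the union of the closed neighborhoods of two adjacent vertices has at most four; therefore $n=|V(\Gamma_E(R))|\le 4$. Finally $C_3$ is complete and $C_4=K_{2,2}$ is complete bipartite but not a star $K_{n,1}$, so $C_3$ and $C_4$ are excluded by Propositions~\ref{incomplete} and~\ref{bipartite} respectively. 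Hence $\Gamma_E(R)$ is never a cycle.

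I do not expect a genuine obstacle here; the one subtlety is that Lemma~\ref{assedge} permits the maximal associated prime adjacent to a given vertex to depend on that vertex, so the covering statement must be phrased as a union over all of $A$ rather than around one fixed vertex. Triangle-freeness keeps $|A|\le 2$, so this union is still too small to cover $C_n$ for $n\ge 5$, which is exactly what remained to be ruled out after the preceding remarks.
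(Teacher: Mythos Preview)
Your argument is correct and is a genuinely different route from the paper's. The paper disposes of $C_5,C_6,C_7$ (after the remarks handling $C_3$, $C_4$, and $C_{\ge 8}$) by a direct multiplicative trick: along a path $[x]-[y]-[z]-[w]-[v]$ one forms $[yw]$, which is nonzero since $[y]$ and $[w]$ are non-adjacent, and observes that it is annihilated by $[x]$, $[z]$, and $[v]$; no vertex in a $2$-regular graph can have three distinct annihilating classes. You instead exploit the associated-prime structure from Lemma~\ref{assedge}: the associated primes form a clique, hence have size at most two in a triangle-free cycle, and their closed neighborhoods cover the graph, forcing $n\le 4$; then Propositions~\ref{incomplete} and~\ref{bipartite} finish. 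The paper's approach is shorter and entirely self-contained, needing no appeal to $\Ass(R)$; your approach is more structural, makes the role of Lemma~\ref{assedge} transparent, and incidentally shows directly that $n\le 4$ without invoking the diameter bound used in the preceding remarks for $n\ge 8$.
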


\begin{proof}  As mentioned above, it suffices to show that cycle graphs which contain a path of length five are not possible.  Suppose $[x]-[y]-[z]-[w]-[v]$ is a path in the graph.  Then $[yw]$ is annihilated by $[x]$, $[z]$, and $[v]$, but no such class exists.
\end{proof}

For a vertex $v$ of a simple graph $G$, the set of vertices adjacent to $v$ is called the \textit{neighborhood of $v$} and denoted $N_G(v)$, or simply $N(v)$ if the graph is understood.

\begin{lemma}\label{neighborhood_property}  Let $G$ be a finite, simple graph with the property that two distinct vertices $v$ and $w$ of $G$ are non-adjacent if and only if $N_G(v)=N_G(w)$.  Then $G$ is a complete $r$-partite graph for some positive integer $r$.
\end{lemma}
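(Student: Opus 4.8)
The plan is to define a relation on the vertex set of $G$ by declaring $v \approx w$ whenever $v = w$ or $N_G(v) = N_G(w)$, show that this is an equivalence relation, and then verify that the resulting partition exhibits $G$ as a complete $r$-partite graph, where $r$ is the (necessarily finite, since $G$ is finite) number of equivalence classes. By the hypothesis, $v \approx w$ holds precisely when $v$ and $w$ are non-adjacent or equal; so the content is to check that ``non-adjacent or equal'' is transitive under the stated hypothesis, and that edges between classes behave correctly.

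First I would establish that $\approx$ is an equivalence relation. Reflexivity and symmetry are immediate from the definition. For transitivity, suppose $v \approx w$ and $w \approx u$ with $v, w, u$ pairwise distinct; then $N_G(v) = N_G(w) = N_G(u)$, so in particular $N_G(v) = N_G(u)$, giving $v \approx u$. (The degenerate cases where two of the three coincide are trivial.) The only subtlety worth a sentence is that $v \notin N_G(v)$ since $G$ is simple, which keeps the neighborhood condition consistent with non-adjacency.

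Next I would show that within a single equivalence class there are no edges: if $v \approx w$ with $v \neq w$, then by hypothesis $v$ and $w$ are non-adjacent. Then I would show that between two distinct classes every possible edge is present: let $v$ and $w$ lie in different classes, so $v \not\approx w$, hence (by hypothesis, in the contrapositive direction) $v$ and $w$ are adjacent. Moreover adjacency is constant across classes: if $v \approx v'$ and $w$ is adjacent to $v$, then $w \in N_G(v) = N_G(v')$ when $v \neq v'$, so $w$ is adjacent to $v'$ as well; combined with the previous point, every vertex of one class is joined to every vertex of a different class. This is exactly the definition of a complete $r$-partite graph with the equivalence classes as the parts, and $r \geq 1$ since $G$ has at least one vertex (if $G$ is empty the statement is vacuous or one takes $r$ accordingly).

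I do not expect a genuine obstacle here; the argument is essentially bookkeeping. The one place to be slightly careful is the direction of the biconditional in the hypothesis: ``non-adjacent $\iff$ same neighborhood'' is being used both to conclude no edges inside a class (from same neighborhood to non-adjacent) and to conclude all edges between classes (from distinct classes, i.e.\ different neighborhoods, to adjacent). Making sure each of these two uses invokes the correct implication, and handling the trivial coincidence cases in the transitivity check, is the whole of the work.
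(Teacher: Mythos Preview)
Your proof is correct, but it takes a different route from the paper's. The paper argues by induction on $|V(G)|$: it picks a maximal independent set $I$, observes that the common neighborhood $N$ of $I$ together with $I$ exhausts $V(G)$, checks that the induced subgraph on $N$ again satisfies the hypothesis, and concludes by the inductive hypothesis that $N$ is complete $(r-1)$-partite, whence $G$ is complete $r$-partite. Your approach instead identifies all the parts at once, by noting that ``same neighborhood'' is manifestly an equivalence relation and that the hypothesis converts its classes directly into the parts of a complete multipartite structure. Your argument is shorter and avoids the inductive bookkeeping (in particular the check that the hypothesis descends to the induced subgraph on $N$); the paper's version is more iterative in flavor but proves nothing stronger. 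The slightly redundant paragraph about adjacency being constant across classes can be dropped, since once you have shown that $v\not\approx w$ forces adjacency, every cross-class edge is already present.
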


\begin{proof}  We induct on the number of vertices of $G$.  If $G$ has one vertex, then $G$ is a complete $1$-partite graph.  So suppose $G$ has more than one vertex.  Let $I=\{y_1, y_2, \ldots, y_n\}$ be a maximal set of pairwise non-adjacent vertices in $G$ (that is, $I$ is an independent set of $G$), and let $ N = \{x_1, \ldots, x_d\}$ be the common neighborhood of the elements of $I$.  Then $I$ and $N$ are disjoint and the vertices of $G$ are precisely $ I \cup  N$.  If $N=\varnothing$, then $G$ is $1$-partite and we are done.  So assume $N\neq \varnothing$.  Let $H$ be the graph induced by $ N$.  Note that $N_H(v) = N_G(v) \cap N$ and $N_G(v)=N_H(v) \cup  I$ for any vertex $v$ of $H$, hence two distinct vertices $v$ and $w$ of $H$ are non-adjacent if and only if $N_H(v)=N_H(w)$.  Since $H$ has fewer vertices than $G$, the induction hypothesis implies that $H$ is a complete $(r-1)$-partite graph for some $r\geq 2$.  But by the definition of $I$, this implies that $G$ is a complete $r$-partite graph.
\end{proof}

\begin{proposition}  \label{nonregular} For any ring $R$, there are no finite regular\footnote{A {\it regular graph} is one in which $\deg v =\deg w$ for every pair of vertices $v$ and $w$ of the graph.} graphs $\Gamma_E(R)$ with more than two vertices.
\end{proposition}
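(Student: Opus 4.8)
The plan is to bypass a direct case analysis on the degree and instead show that a finite regular $\Gamma_E(R)$ automatically satisfies the hypothesis of Lemma \ref{neighborhood_property}: two distinct vertices are non-adjacent if and only if they have the same neighborhood. Granting this, Lemma \ref{neighborhood_property} makes $\Gamma_E(R)$ complete $r$-partite, Proposition \ref{bipartite} forces $r=2$ and $\Gamma_E(R)=K_{n,1}$, and $K_{n,1}$ is regular only when $n=1$ (the single edge $K_2$ on two vertices) --- contradicting the standing assumption of more than two vertices.

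So suppose $\Gamma_E(R)$ is $k$-regular with more than two vertices; by Proposition \ref{incomplete} it is not complete, so there is a pair of distinct non-adjacent vertices $[v],[w]$, whence $vw\neq 0$. One implication is free in any simple graph: if $N([v])=N([w])$ then $[v]$ and $[w]$ cannot be adjacent, since that would put $[v]\in N([w])=N([v])$, a loop. For the converse I would pass to the product vertex $[vw]$. Since $v$ is a zero divisor, anything killing $v$ kills $vw$, so $vw$ is a nonzero zero divisor and $[vw]$ is an honest vertex; moreover every element annihilating $v$ or annihilating $w$ annihilates $vw$, hence $\bigl(N([v])\cup N([w])\bigr)\setminus\{[vw]\}\subseteq N([vw])$. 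If $N([v])\neq N([w])$, then --- both neighborhoods having size $k$ --- their union has size at least $k+1$, so $\deg[vw]\geq (k+1)-1=k$; regularity makes this an equality, which in turn forces $[vw]\in N([v])\cup N([w])$ and $N([vw])=\bigl(N([v])\cup N([w])\bigr)\setminus\{[vw]\}$ exactly. But then $[vw]$ is adjacent to one of $[v],[w]$, say $[v]$, so $[v]\in N([vw])\subseteq N([v])\cup N([w])$; since $[v]\notin N([v])$ this gives $[v]\in N([w])$, contradicting non-adjacency. Hence $N([v])=N([w])$, as needed.

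The step I expect to be the crux is precisely this degree count on $[vw]$: one must observe that deleting $[vw]$ from $N([v])\cup N([w])$ costs at most one vertex, so that regularity pins the union's size to $k+1$ \emph{and} forces $[vw]$ itself to lie in that union --- and it is this last consequence that collides with $[v]$ and $[w]$ being non-adjacent. Everything after that is bookkeeping: assembling the biconditional, invoking Lemma \ref{neighborhood_property} and Proposition \ref{bipartite}, and noting that $K_{n,1}$ with $n\geq 2$ has vertices of degree $1$ and a vertex of degree $n\geq 2$. (If one prefers, the low-degree cases can be removed at the outset instead --- a connected $1$-regular graph is $K_2$, and a connected $2$-regular graph is a cycle, impossible by Proposition \ref{nocycle} --- but the product-vertex argument dispatches all $k$ at once.)
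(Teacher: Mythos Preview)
Your proof is correct and follows essentially the same route as the paper: use the product vertex $[vw]$ together with regularity to show that any two non-adjacent vertices share the same neighborhood, then invoke Lemma~\ref{neighborhood_property} and Proposition~\ref{bipartite} to force $\Gamma_E(R)=K_{n,1}$, which is not regular for $n\geq 2$. The only difference is cosmetic: the paper derives the contradiction from the single inclusion $N([y_2])\subseteq N([y_1y_2])$ (both sets of size $d$, hence equal, yet separated by the witness $[u]$) rather than by counting the union $N([v])\cup N([w])$ as you do.
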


\begin{proof}  
Suppose that $R$ is a ring such that $\Gamma_E(R)$ is a regular graph of degree $d$ with at least $3$ vertices.  By earlier results, $d\geq 2$ and $|N([x])|=d$ for all $[x]\in \Gamma_E(R)$.  Since $\Gamma_E(R)$ is not complete, there exists two non-adjacent vertices $[y_1]$ and $[y_2]$.  If $N([y_1])\neq N([y_2])$, then without loss of generality we may assume that there is a vertex $[u] \in N([y_1]) \backslash N([y_2])$.  Thus $uy_1=0$ but $uy_2 \neq 0$.  This implies $[u] \in N([y_1y_2])$ and so $N([y_2])\neq N([y_1y_2])$.  But $N([y_2])\subseteq N([y_1y_2])$ and since each set has cardinality $d$ we must have equality, which leads to a contradiction.  Therefore, any two non-adjacent vertices on the graph have the same neighborhood, and clearly the converse is true.  Thus by Lemma \ref{neighborhood_property} and Proposition \ref{bipartite}, $\Gamma_E(R) =K_{n,1}$ for some $n\geq 1$.  If $\mathcal I=\{[y_1], [y_2], \dots, [y_n]\}$ is a maximal independent set in $\Gamma_E(R)$ and $\mathcal N = \{[x_1], \dots, [x_d]\}$ is the common neighborhood of the elements of $\mathcal I$, then $n\geq 2$ and by (the proof of) Lemma \ref{neighborhood_property}, $\mathcal I$ is one of the partitions of the vertex set of $\Gamma_E(R)$.  This implies that $d=1$, leading to a contradiction.  Thus $\Gamma_E(R)$ is not regular.
\end{proof}

Finally, we give the connection between $\Gamma_E(R)$ and the
zero-divisor graph $\Gamma(R)$ in \cite{AL}.  Let $\Gamma$ be a graph.  To each vertex $v_i$ of $\Gamma$, assign
an element $w_i \in \mathbb Z^{+} \cup \{\infty\}$, called the
weight of $v_i$, and let $w = (w_1, w_2, \ldots )$.  Define a
graph $\Gamma^{(w)}$ with vertex set $\{v_{k_i,i}\;|\; 1\leq k_i \leq w_i \}
$ and edge set is $\{ v_{k_i,i} v_{k_j,j} \;|\; i \neq j \mbox{ and } v_iv_j \mbox{
is an edge of }\Gamma\}.$
Intuitively, the vertices $v_{k_i,i}$ of $\Gamma^{(w)}$ form a
``covering set" of cardinality $w_i$ of the vertex $v_i$ of $\Gamma$,
and if $v_i$ is connected to $v_j$ in $\Gamma$, then every vertex of
the covering set of $v_i$ is connected to every vertex of the
covering set of $v_j$.

For each vertex $[v_i]$ of $\Gamma_E(R)$, let $w_i = |[v_i]|$, and let
$w=(w_1,w_2,\ldots )$. By the definitions of $\Gamma_E(R)$ and $\Gamma(R)$,
it is clear that $\Gamma_E(R)^{(w)}$ is a subgraph of $\Gamma(R)$.  In fact,
$\Gamma_E(R)^{(w)} = \Gamma(R)$ if and only if whenever $R$ contains a non-zero element $x$ such that $x^2=0$, then $|[x]| = 1$.  In particular, if $R$ is reduced, then $\Gamma_E(R)^{(w)} = \Gamma(R)$.  In general, $\Gamma(R)$ is the graph
obtained from $\Gamma_E(R)^{(w)}$ by adding edges between every pair of
vertices $v_{k_i,i}$ and $v_{k'_i,i}$ if and only if $v_i^2=0$.

\begin{example} The zero divisor graph $\Gamma(\mathbb Z_{12})$ is shown
``covering" the graph $\Gamma_E(\mathbb Z_{12})$.  Note that if $w = (2, 1, 2, 2)$, then $\Gamma_E(\mathbb Z_{12})^{(w)} = \Gamma(\mathbb Z_{12})$.

\xymatrix{
  & 2 \ar@{-}[dr]  & & & 4 \ar@{-}[dll]  \ar@{-}[dr] \ar@{-}[drr] \\
  & & 6 \ar@{-}[drr] \ar@{-}[dl] & & & 3 \ar@{-}[dl] & 9 \ar@{-}[dll] & & \Gamma(\mathbb Z_{12}) \\
  & 10 & & &  8 \\
  & \\
& [2] \ar@{-}[r] & [6] \ar@{-}[rr] & & [4] \ar@{-}[rr] & & [3] & & \Gamma_E(\mathbb Z_{12}) }
\end{example}

\section{Infinite Graphs and Fan Graphs}

In this section we investigate the properties of infinite graphs.  There are ostensibly two ways a graph can be infinite, namely it can have infinitely many vertices or it can have vertices with infinite degree.  More importantly, one might ask if the assumption that $R$ is Noetherian forces $\Gamma_E(R)$ to be a finite graph.  We begin with a discussion of degrees.

\begin{proposition}  \label{contain} Let $x$ and $y$ be elements of $R$.  If $0 \neq \ann(x) \subsetneq \ann(y)$, then $\deg[x] \leq \deg[y]$.
\end{proposition}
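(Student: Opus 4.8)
The plan is to exhibit an almost-injective map from the neighborhood $N([x])$ into the neighborhood $N([y])$ and count. First I would record the easy bookkeeping: the hypothesis $0 \neq \ann(x) \subsetneq \ann(y)$ guarantees that both $[x]$ and $[y]$ are genuine vertices of $\Gamma_E(R)$ (the proper containment forces $x \neq 0$ and $\ann(y) \neq 0$), and it forces $[x] \neq [y]$ because $\ann(x) \neq \ann(y)$.

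The heart of the argument is the observation that a vertex $[u]$ belongs to $N([x])$ exactly when $[u] \neq [x]$ and $u \in \ann(x)$; since $\ann(x) \subseteq \ann(y)$, any such $u$ automatically satisfies $u \in \ann(y)$, i.e. $uy = 0$. Consequently every $[u] \in N([x])$ with $[u] \neq [y]$ also lies in $N([y])$, so the identity on equivalence classes restricts to an injection $N([x]) \setminus \{[y]\} \hookrightarrow N([y])$.

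From here I would split into two cases. If $[y] \notin N([x])$, the map above is simply the inclusion $N([x]) \subseteq N([y])$ and we are done. If $[y] \in N([x])$, then $xy = 0$, so $x \in \ann(y)$ and, since $[x] \neq [y]$, we get $[x] \in N([y])$; but $[x] \notin N([x])$ because the graph is simple, so $[x]$ is a neighbor of $[y]$ not among the classes in $N([x]) \setminus \{[y]\}$. Hence $N([y])$ contains the disjoint union $\big(N([x]) \setminus \{[y]\}\big) \sqcup \{[x]\}$, whose cardinality is $\deg[x]$, and again $\deg[x] \leq \deg[y]$.

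I do not anticipate a serious obstacle: the only delicate point is the case $[y] \in N([x])$, where a single neighbor of $[x]$ (namely $[y]$ itself) fails to be a neighbor of $[y]$, but this loss is exactly offset by the new neighbor $[x]$ of $[y]$. The same injections give the inequality of cardinals even when the degrees are infinite, so no finiteness hypothesis is needed.
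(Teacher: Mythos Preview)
Your argument is correct and follows the same idea as the paper: the containment $\ann(x)\subseteq\ann(y)$ forces every neighbor of $[x]$ to annihilate $y$. The paper's proof is a single line (``if $ux=0$ then $uy=0$'') and does not pause over the possibility $[u]=[y]$; your case split handles that bookkeeping explicitly, so your write-up is in fact more careful than the original.
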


\begin{proof}  If $[u] \in \Gamma_E(R)$ such that $ux=0$, then clearly $uy=0$.
\end{proof}

\begin{proposition} \label{infvertexset} The vertex set of $\Gamma_E(R)$ is infinite if and only
if there is some associated prime $[x]$ maximal in $\mathfrak F$ such that $\deg[x]=\infty$.
\end{proposition}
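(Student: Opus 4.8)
The plan is to prove the two implications separately; essentially all the content sits in the forward direction, where the Noetherian hypothesis enters only through the finiteness of $\mathrm{Ass}(R)$.

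For the reverse implication, suppose some associated prime $[x]$ maximal in $\mathfrak F$ has $\deg[x] = \infty$. Then $[x]$ is adjacent to infinitely many distinct vertices, so the vertex set of $\Gamma_E(R)$ is infinite. This requires no ring-theoretic input at all.

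For the forward implication I would argue by contraposition: assume every associated prime maximal in $\mathfrak F$ has finite degree, and deduce that $\Gamma_E(R)$ has only finitely many vertices. First I would note that since $R$ is Noetherian, $\mathrm{Ass}(R)$ is finite, and hence the set of associated primes that are maximal in $\mathfrak F$ --- being a subset of $\mathrm{Ass}(R)$ --- is finite; call its members $[x_1], \dots, [x_k]$. Next I would apply Lemma \ref{assedge}: every vertex $[v]$ of $\Gamma_E(R)$ is either an associated prime or adjacent to some $[x_i]$. Consequently the vertex set is contained in
\[
\mathrm{Ass}(R) \cup N([x_1]) \cup \cdots \cup N([x_k]),
\]
where we identify each associated prime with its vertex as in the text. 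The right-hand side is a finite union of finite sets --- $\mathrm{Ass}(R)$ is finite, and each $N([x_i])$ is finite by assumption --- so the vertex set is finite, which is the contrapositive of the claim.

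I do not expect a genuine obstacle. The two points needing a moment's care are: (i) observing that the associated primes maximal in $\mathfrak F$ form a finite set, which is immediate from finiteness of $\mathrm{Ass}(R)$; and (ii) correctly invoking Lemma \ref{assedge} so that an arbitrary vertex is either among the finitely many associated primes or lies in the neighborhood of one of the finitely many maximal ones. The role of the Noetherian hypothesis --- finiteness of $\mathrm{Ass}(R)$ --- is precisely what makes the equivalence hold, and dropping it would break the forward direction.
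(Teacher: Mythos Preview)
Your proposal is correct. Both implications are sound, and your use of Lemma~\ref{assedge} together with the finiteness of $\mathrm{Ass}(R)$ is exactly what is needed.

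Your route differs from the paper's, however. The paper argues the forward direction directly rather than by contraposition: it lets $[x_1],\dots,[x_r]$ be the maximal elements of $\mathfrak F$, supposes $\deg[x_1]<\infty$, and then observes that each of the infinitely many vertices $[w]$ not adjacent to $[x_1]$ must be adjacent to some $[v]$ with $v\in\ann(x_1)$ (using that $\ann(x_1)$ is prime). A pigeonhole step then forces some such $[v]$ to have infinite degree, and Proposition~\ref{contain} transfers this to an $[x_j]$. Your contrapositive covering argument, by contrast, bypasses the primality step entirely: once Lemma~\ref{assedge} is in hand, the vertex set is contained in $\mathrm{Ass}(R)\cup\bigcup_i N([x_i])$, and the conclusion is immediate. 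Your argument is shorter and isolates the role of the Noetherian hypothesis more transparently; the paper's argument is a bit more hands-on but makes the mechanism (primality of maximal annihilators) visible.
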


\begin{proof} Clearly, if some vertex has infinite degree, then the vertex set of $\Gamma_E(R)$ is infinite.  Conversely, suppose the vertex set of $\Gamma_E(R)$ is infinite.  Let $[x_1], \dots, [x_r]$ be the maximal elements in $\frak F.$  If $\deg[x_1] < \infty$, then there are infinitely many vertices $[w]$ such that $wx_1 \neq 0$.  Now $[w][v] = 0$ for some class $[v] \neq [w]$, and since $[x_1]$ is prime, we must have $v \in \ann(x_1)$.  If there are infinitely many distinct vertices $[v]$, then $\deg[x_1] = \infty$, a contradiction.  Therefore, the set of $[v]$'s is finite, and hence $\deg[v] = \infty$ for some $v$.  Either $[v]$ is an associated prime of $R$ and maximal in $\frak F$, or $\ann(v) \subsetneq \ann(x_j)$ for some $j \neq 1$.  In either case, the result holds.
\end{proof}

\begin{definition}  A {\it fan} graph is a complete bipartite graph $K_{n, 1}$, $n \in \mathbb N\cup \{\infty\}$.  If $n=\infty$, we say the graph is an infinite fan graph.
\end{definition}

\begin{proposition}  \label{fan} Any ring $R$ such that $\Gamma_E(R)$ is a fan graph with at least four vertices satisfies the following properties:
\begin{enumerate}
\item Ass($R) = \{\mathfrak p \}$

\item $\mathfrak p^3 = 0$

\item char($R$) =  2, 4, or 8.
\end{enumerate}
\end{proposition}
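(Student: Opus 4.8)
The plan is to read everything off the rigidity of $K_{n,1}$ with $n\ge 3$. Label the unique vertex of the size-one part the \emph{apex} $[c]$ and the remaining vertices the \emph{leaves} $[a_1],\dots,[a_n]$. The graph encodes precisely that $c\,a_i=0$ for every $i$, that $a_ia_j\ne 0$ whenever $i\ne j$, and that $Z^*(R)=[c]\cup\bigcup_i[a_i]$. I will use these three facts repeatedly.

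For (1): the apex has degree $n$, strictly larger than the degree $1$ of every leaf, so $\ann(c)$ cannot be properly contained in any other $\ann(x)$ (Proposition \ref{contain}); thus $\ann(c)$ is maximal in $\mathfrak F$ and $\p:=\ann(c)\in\Ass(R)$. By Lemma \ref{assedge} the associated primes form a clique, and since $K_{n,1}$ with $n\ge 3$ contains no triangle, $|\Ass(R)|\le 2$, any second associated prime being a leaf $\mathfrak q=\ann(a_1)$. To rule this out I would pick two further leaves $a_2,a_3$ (here $n\ge 3$ is used): they lie outside the prime $\mathfrak q$, so $a_2a_3\notin\mathfrak q$; but $c\,a_2a_3=0$ makes $a_2a_3$ a nonzero zero-divisor adjacent to $[c]$, and it cannot be central because the apex class is contained in $\mathfrak q=\ann(a_1)$, so $[a_2a_3]$ is a leaf, and one tracks the resulting configuration to a contradiction. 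Concluding $\Ass(R)=\{\p\}$ gives $Z(R)=\p=\operatorname{nil}(R)$ and, since $c\in Z(R)=\ann(c)$, also $c^2=0$; I would then argue, using finiteness of $\Gamma_E(R)$, that $R$ is Artinian local with $\mathfrak m=\p$ and with the apex class equal to $(0:\mathfrak m)\setminus\{0\}$, the nonzero socle.

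For (2) and (3) I would pass to $V=\mathfrak m/\mathfrak m^2$, a vector space over $k=R/\mathfrak m$, and study the symmetric $k$-bilinear pairing induced by multiplication into $\mathfrak m^2$. The leaves correspond to the lines in $V$, and ``no two distinct leaves are adjacent'' translates into the statement that linearly independent vectors pair nontrivially. Consequently, for each nonzero $u\in V$ the orthogonal hyperplane $u^{\perp}$ must be contained in the line $ku$; this forces $\dim_k V\le 2$, and $\dim_k V=1$ would leave only one leaf, so $\dim_k V=2$. Then every line is isotropic, i.e.\ the pairing is alternating as well as symmetric and nonzero, which is impossible unless $\operatorname{char}k=2$. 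The same nondegenerate alternating pairing on a plane has one-dimensional image, which together with the socle description collapses $\mathfrak m^3$ to $0$, giving $\p^3=0$. Finally $\operatorname{char}k=2$ means $2\in\mathfrak m=\p$, so $2^3\in\mathfrak m^3=0$; hence the additive order of $1$ divides $8$ and is a power of $2$, i.e.\ $\operatorname{char}R\in\{2,4,8\}$.

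The main obstacle is the interplay between the nilpotency bound $\p^3=0$ and the bilinear-form argument: the clean ``$u^{\perp}\subseteq ku$'' computation presupposes that the pairing is defined on $V$ with values in $\mathfrak m^2$, i.e.\ that $\mathfrak m^3=0$, whereas a priori products of leaves could sink into higher powers of $\mathfrak m$. I expect the delicate step to be bootstrapping this: one must run the argument with the graded pairing $V\times V\to\mathfrak m^2/\mathfrak m^3$, show that the leaf directions already pair nontrivially there (equivalently, that leaf representatives can be chosen in $\mathfrak m\setminus\mathfrak m^2$ with products nonzero modulo $\mathfrak m^3$), and thereby force $\dim_k V=2$ and $\mathfrak m^3=0$ simultaneously. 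This same control is what makes the exclusion of an embedded associated prime in (1) go through, so I would expect to establish the two in tandem rather than strictly in the stated order.
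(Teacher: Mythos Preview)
Your linear-algebraic strategy is appealing but rests on a step that fails: you assert that $R$ is Artinian local with maximal ideal $\mathfrak m=\p$, and then work with the $k$-vector space $V=\mathfrak m/\mathfrak m^2$ over $k=R/\mathfrak m$. The proposition, however, allows the fan to be infinite, and the paper's own Example~\ref{inffan}, $R=\mathbb Z_2[X,Y,Z]/(X^2,Y^2)$, is exactly such a case. There $\p=(x,y)$ is the unique associated prime but is \emph{not} maximal, $R/\p\cong\mathbb Z_2[Z]$ is not a field, and $R$ has positive Krull dimension. So $\p/\p^2$ is not a vector space over a field, the ``lines in $V$'' picture collapses, and the alternating/symmetric dichotomy you invoke to force $\operatorname{char}k=2$ has no meaning. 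Even when $\Gamma_E(R)$ is finite you have not justified the Artinian claim, and the proposition does not assume finiteness.

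The paper avoids all of this by arguing directly with products of equivalence classes, never leaving the graph. From the distinctness of the leaves it gets $z_i^2=0$; then $[z_iz_j]$ must be the apex $[y]$, so elements of $[y]$ square to zero and any triple product among $x,z_1,z_2,\ldots$ vanishes; this already gives $\p^3=0$ and hence (1). For (3) it observes that $[z_i+z_j]$ and $[z_i-z_j]$ are leaves forced to coincide, so $(z_i+z_j)^2=0$ yields $2z_iz_j=0$, placing $2$ in $\p$ and hence $2^3=0$. Your outline for (1) via the triangle-free clique of associated primes is on the right track, but the ``track the configuration to a contradiction'' step is not carried out; the paper instead folds (1) into the nilpotency computation. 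If you want to salvage your approach, replace the vector-space argument by a module-theoretic one over the domain $R/\p$, or simply adopt the paper's direct product calculations, which require no structural hypotheses on $R$ beyond Noetherian.
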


\begin{proof}  Suppose we have a fan graph with at least four vertices, as show below.

\xymatrix{
 & & & & & [y] \ar@{-}[dl] \ar@{-}[d] \ar@{-}[dr] \ar@{.}[drr] \\
 & & & & [x]  & [z_1]  & [z_2] & [z_i]}

Let $[y]$ be the unique vertex with maximal degree.  Note that $[y]$ is a maximal element in $\mathfrak F$, for if $\ann(y) \subsetneq \ann(w)$ for some $w$, then by Proposition \ref{contain} $\Gamma_E(R)$ would contain two vertices of degree larger than 1.  Next, in order for the classes $[x], [z_1], \dots$ to be distinct, we must have $z_i^2 = 0$, for all $i$.
If $i \neq j$, then $z_iz_j$ is non-zero and annihilated by $z_i$ and $z_j$; hence, the only choice for $[z_iz_j]$ is $[y]$.  Moreover, $(z_iz_j)^2 =0$ implies that every element of $[y]$ is nilpotent of order 2, by    \cite[(3.5)]{M}.   Since $[y]$ is connected to every vertex, it follows that  $z_iz_jz_k=z_iz_jx =0$ for any $i,j,k$.  As a result, we have $[xz_i] = [y]$.  Thus, $x^2z_i = 0$ for each $i$.  Consequently, either $x^2 = 0$ or $[x^2] = [y]$.  Since the latter case implies $x^3 = 0$, it follows that every zero divisor is nilpotent of order at most three.  This establishes (1) and (2).

For (3), consider $z_i + z_j$, which is annihilated by $z_iz_j$, but not $z_i$ or $z_j$.  Therefore, $[z_i+z_j]$ represents a degree one vertex distinct from $[z_i]$ and $[z_j]$.  The same conclusion can be reached for $[z_i-z_j]$.  Since $(z_i+z_j)(z_i-z_j) =0$, this means that $[z_i-z_j] =[z_i+z_j]$, and every element in the class is nilpotent of order two.  In particular, $(z_i + z_j)^2 = 0 \Rightarrow 2z_iz_j = 0$.  Consequently, either $2 \equiv 0$ or 2 is a zero divisor in $R$.  If char($R) \neq 2$, then $[2]$ is somewhere on the graph and either $2^2 = 0$ or $2^3 = 0$.
\end{proof}

\begin{corollary}  \label{finite_local} If $R$ is a finite ring and $\Gamma_E(R)$ is a fan graph with at least four vertices, then $R$ is a local ring.
\end{corollary}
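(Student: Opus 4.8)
\medskip

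The plan is to combine the structural conclusion of Proposition~\ref{fan} --- that $\Ass(R)$ is a singleton --- with the observation that a finite commutative ring is Artinian of Krull dimension zero, so that its associated primes are exactly its maximal ideals.

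First I would record that a finite commutative ring $R$ is Artinian, since it has only finitely many ideals and hence trivially satisfies the descending chain condition. In an Artinian ring every prime ideal is maximal, and therefore also minimal, so $\operatorname{Spec}(R) = \Min(R) = \operatorname{Max}(R)$. Because $R$ is Noetherian, every minimal prime is an associated prime, giving $\Min(R) \subseteq \Ass(R) \subseteq \operatorname{Spec}(R)$; thus all three sets coincide and, in particular, the number of maximal ideals of $R$ equals $|\Ass(R)|$. Now I would invoke Proposition~\ref{fan}(1): since $\Gamma_E(R)$ is a fan graph with at least four vertices, $\Ass(R) = \{\p\}$ for a single prime $\p$. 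By the preceding step $R$ has exactly one maximal ideal, i.e.\ $R$ is local, as claimed.

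I do not expect a genuine obstacle; the only inputs beyond Proposition~\ref{fan} are standard facts from commutative algebra (a finite ring is Artinian, primes of an Artinian ring are maximal, minimal primes of a Noetherian ring are associated). If one prefers to avoid even these, the classification $R \cong R_1 \times \cdots \times R_k$ of a finite commutative ring as a product of local Artinian rings works equally well: $\Ass(R)$ is then the disjoint union of the sets $\Ass(R_i)$, each of which is nonempty since $R_i \neq 0$, so $|\Ass(R)| \geq k$ and Proposition~\ref{fan}(1) forces $k = 1$. I would present the argument through the equality $\Ass(R) = \operatorname{Max}(R)$, since it is the most direct and does not require the structure theorem for finite rings.
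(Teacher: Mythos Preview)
Your argument is correct. The paper's own proof actually takes your \emph{alternative} route: it invokes the structure theorem that a finite commutative ring is a product of finite local rings, observes that the number of associated primes equals the number of factors, and then applies Proposition~\ref{fan}(1) to conclude there is only one factor. Your preferred approach, showing directly that $\Ass(R)=\operatorname{Max}(R)$ via the fact that a finite ring is Artinian of Krull dimension zero, is equally valid and arguably more self-contained, since it avoids citing the structure theorem. Both arguments are one line once Proposition~\ref{fan}(1) is in hand, so the difference is purely a matter of taste.
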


\begin{proof}  A finite ring is a product of finite local rings, and the number of associated primes corresponds to the number of factors in the product; see, e.g., \cite{mcdonald}.
\end{proof}

It is unknown to the authors whether, for each positive integer $n$, the fan graph $K_{n, 1}$ can be realized as $\Gamma_E(R)$ for some ring $R$, or how one would go about the general construction or argument.  However, the following example shows that there exist rings $R$ such that $\Gamma_E(R)$ is a fan graph with infinitely many leaves.

\begin{example}  \label{inffan} Let $R = \mathbb Z_2[X,Y,Z]/(X^2, Y^2)$.  Then $\Gamma_E(R)$ is an infinite fan graph.  If $x$ and $y$ denote the images of $X$ and $Y$, respectively, then $\Ass(R) = \{(x,y)\}$, and the corresponding vertex $[xy]$ is the central vertex of the graph.  The leaves, besides $[x]$, $[y]$, are vertices of the form $[z^mx+z^ny]$ for each ordered pair $(m,n)$ of nonnegative integers.
\end{example}

\begin{remark}  Example \ref{inffan} shows that the Noetherian condition is not enough to force $\Gamma_E(R)$ to be finite.
\end{remark}

\section{Associated Primes}

One of the main motivations in studying graphs of equivalence classes of zero divisors is the fact that the associated primes of $R$, by their very definition, correspond to vertices in $\Gamma_E(R)$.  The focus here is the identification of associated primes of $R$, given $\Gamma_E(R)$.

It is important to note from Proposition \ref{contain} in the last section that proper containment of annihilator ideals does not translate into a strict inequality of degrees.  For instance, the ring $\mathbb Z[X,Y]/(X^3, XY)$, shown in Example \ref{introex} below, satisfies $\ann(y) \subsetneq \ann(x)$, but $\deg[y] = \deg[x]=2$.  In order to achieve strict inequality on degrees, further assumptions on the annihilator ideals are needed.  This issue is addressed in this section; but first we make some key observations.  The contrapositive of Proposition \ref{contain} is useful and worth stating:

\begin{proposition}  \label{>}   Let $x$ and $y$ be elements of $R$.  If $\deg[y] > \deg[x]$ for every $[x]$ in $\Gamma_E(R)$, then $\ann(y)$ is maximal in $\mathfrak F$ and hence is an associated prime.
\end{proposition}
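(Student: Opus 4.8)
The plan is to prove this by contradiction, using that the statement is exactly the contrapositive of Proposition~\ref{contain}. Since $[y]$ is a vertex of $\Gamma_E(R)$, we have $y\in Z^*(R)$ and hence $0\neq\ann(y)\in\mathfrak F$; this is the only fact about $y$ we shall need beyond the degree hypothesis.

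Now suppose, toward a contradiction, that $\ann(y)$ is not maximal in $\mathfrak F$. Because $R$ is Noetherian, $\mathfrak F$ satisfies the ascending chain condition, so there is an element $z\in R$ with $\ann(y)\subsetneq\ann(z)$ and $\ann(z)$ maximal in $\mathfrak F$; as recorded in the preliminaries, every maximal element of $\mathfrak F$ is an associated prime, so in particular $[z]$ is a vertex of $\Gamma_E(R)$. Since $0\neq\ann(y)\subsetneq\ann(z)$, Proposition~\ref{contain} gives $\deg[y]\leq\deg[z]$. But $\ann(z)\neq\ann(y)$ forces $[z]\neq[y]$, and then the hypothesis $\deg[y]>\deg[x]$ (applied with $[x]=[z]$) gives $\deg[y]>\deg[z]$, a contradiction. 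Therefore $\ann(y)$ is maximal in $\mathfrak F$, and hence an associated prime, which is the assertion.

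I do not anticipate a genuine obstacle. The two points that want a moment's attention are: (i) that $\ann(y)\neq 0$, so that Proposition~\ref{contain} actually applies (its hypothesis is $0\neq\ann(x)\subsetneq\ann(y)$) --- this is immediate because a vertex of $\Gamma_E(R)$ is a nonzero zero divisor; and (ii) that it is the Noetherian hypothesis which lets us replace a merely larger annihilator by a \emph{maximal} one sitting strictly above $\ann(y)$, so that the resulting class is automatically an associated prime. Both are routine, so the argument reduces to a two-line deduction from Proposition~\ref{contain}.
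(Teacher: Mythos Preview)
Your argument is correct and matches the paper's own approach: the paper simply records this proposition as the contrapositive of Proposition~\ref{contain}, and you have written out that contrapositive in detail. One small remark: you do not actually need to pass to a \emph{maximal} $\ann(z)$ via the Noetherian hypothesis---any $z$ with $\ann(y)\subsetneq\ann(z)$ already lies in $Z^*(R)$ and yields the contradiction---but this extra step does no harm.
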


Of course, we already saw evidence of this fact in Proposition \ref{fan}.   In that case, the vertex of maximal degree corresponds to the unique element of $\Ass(R)$.  Similarly, if $\Gamma_E(R)$ has exactly three vertices, then the vertex of degree two always corresponds to an associated prime maximal in $\mathfrak F$.  However, in this case $\Ass(R)$ may consist of more than one ideal.  For the example used in Corollary \ref{three}, the associated primes are $\ann(x) = (2, x, y)$ and $\ann(2) = (2, x)$.
This is just one instance where graphs with only two or three vertices prove the exception to the rule.

\begin{proposition}  \label{leaf} If $R$ is a ring such that $|\Gamma_E(R)| > 3$, then no associated prime of $R$ is a leaf.
\end{proposition}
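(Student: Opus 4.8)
The plan is to argue by contradiction. Suppose some associated prime $[p]$ of $R$ is a leaf; write $\p := \ann(p)$, a prime ideal, and let $[w]$ be the unique neighbor of $[p]$, so that $[w]\neq[p]$ since the graph is simple.

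The first step is to use Lemma \ref{assedge} to pin down the global picture forced by such a leaf. By the first part of that lemma, any associated prime other than $[p]$ is adjacent to $[p]$ and hence equals $[w]$; thus $\Ass(R)\subseteq\{[p],[w]\}$. Since $|\Gamma_E(R)|>3$, there is a vertex $[v]\notin\{[p],[w]\}$. Such a $[v]$ cannot itself be an associated prime (those all lie in $\{[p],[w]\}$), so by the second part of Lemma \ref{assedge} it is adjacent to an associated prime maximal in $\mathfrak F$; that prime lies in $\{[p],[w]\}$, and it is not $[p]$ (whose only neighbor is $[w]$), so it must be $[w]$. Hence $[w]$ is an associated prime, maximal in $\mathfrak F$, and moreover every vertex other than $[p]$ and $[w]$ is adjacent to $[w]$; combined with the edge $[p]-[w]$, the vertex $[w]$ is adjacent to every other vertex of $\Gamma_E(R)$.

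The crux is then the observation that, because $\p$ is prime, $\Gamma_E(R)$ can contain no edge between two vertices both different from $[w]$. Indeed, if $[a]-[b]$ were such an edge then $ab=0\in\p$, so $a\in\p$ or $b\in\p$ by primeness; say $a\in\p=\ann(p)$, so $[a]\cdot[p]=0$. If $[a]=[p]$, the edge $[a]-[b]$ makes $[p]$ adjacent to $[b]$, forcing $[b]=[w]$; if $[a]\neq[p]$, then $[a]$ is adjacent to $[p]$, forcing $[a]=[w]$. Either way one of $[a],[b]$ equals $[w]$, a contradiction. Combining this with the previous step, $\Gamma_E(R)$ is the complete bipartite graph $K_{m,1}$ with $m=|\Gamma_E(R)|-1\geq 3$, i.e.\ a fan graph on at least four vertices. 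By Proposition \ref{fan}(1), $\Ass(R)$ is then a single prime; but $[p]$ and $[w]$ are two distinct associated primes (their degrees are $1$ and $m\geq 3$), which is the desired contradiction.

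The one place I expect to have to be careful is the bookkeeping in the middle step: one must extract from Lemma \ref{assedge} not merely that an outside vertex $[v]$ is \emph{near} an associated prime, but that the neighbor $[w]$ is \emph{itself} an associated prime, since it is precisely the inclusion $\{[p],[w]\}\subseteq\Ass(R)$ that contradicts the singleton conclusion of Proposition \ref{fan}. The prime-ideal argument ruling out edges away from $[w]$, and hence collapsing the graph to a fan, is then immediate.
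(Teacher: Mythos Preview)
Your proof is correct and follows essentially the same route as the paper's: both argue that $\Ass(R)\subseteq\{[p],[w]\}$, use Lemma~\ref{assedge} to see that every other vertex is adjacent to $[w]$, and then exploit primeness of $\ann(p)$ to rule out any edge not touching $[w]$, so that $\Gamma_E(R)$ is a fan $K_{m,1}$ with $m\geq 3$.

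The only difference is the endgame. Having reached the fan, you invoke Proposition~\ref{fan}(1) to force $|\Ass(R)|=1$, contradicting the two associated primes $[p]$ and $[w]$ you have already exhibited. The paper instead finishes by a direct two-line argument: given two distinct leaves $[z],[w']$ (both adjacent only to $[x]=[w]$), the distinctness of their annihilators forces $z^2=0$ or $(w')^2=0$, and primeness of $\ann(y)=\ann(p)$ then puts $z$ or $w'$ in $\ann(y)$, contradicting that $[y]=[p]$ is a leaf. Your appeal to Proposition~\ref{fan} is cleaner to state but imports more machinery; the paper's coda is more self-contained. Either way the argument is sound.
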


\begin{proof}
Suppose $[y]$ is an associated prime of degree one.   Then there is one and only one $[x]$ such that $[x] \neq [y]$ and $[x][y] = 0$.  By Lemma \ref{assedge}, $\Ass(R) = \{\ann(x), \ann(y) \}$.  Any additional vertices, of which there are at least two, must be connected to $[x]$.  If $[z]$ is another vertex and $\deg [z] \geq 2$, then for some $[w]$, $zw = 0 \in \ann(y)$; hence $z \in \ann(y)$ or $w \in \ann(y)$.  Since this is not possible, it must be that $\deg [z] = \deg [w] =1.$  But in order for $[z]$ and $[w]$ to be distinct, we must have $z^2 = 0$ or $w^2 = 0$, which again puts $z \in \ann(y)$ or $w \in \ann(y)$.
\end{proof}

\begin{corollary}  \label{leafID} If $R$ is a ring such that $|\Gamma_E(R)| \geq 3$, then any vertex with a leaf is an associated prime maximal in $\mathfrak F$.
\end{corollary}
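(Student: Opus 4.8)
The plan is to reduce the statement to two facts already in hand, treating the graphs with exactly three vertices as a special case, exactly as Proposition \ref{leaf} does. Throughout, let $[y]$ be a vertex of $\Gamma_E(R)$ that is adjacent to a leaf $[x]$, so that $\deg[x]=1$ and $[y]$ is the unique neighbor of $[x]$; if $\Gamma_E(R)$ has no leaf at all the assertion is vacuous.

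First I would handle the main case $|\Gamma_E(R)|>3$. Here $[x]$ is a leaf, so Proposition \ref{leaf} says $[x]$ is not an associated prime. Then Lemma \ref{assedge} forces $[x]$ to be adjacent to some associated prime that is maximal in $\mathfrak{F}$. Since $[y]$ is the only vertex adjacent to $[x]$, that associated prime must be $[y]$, which is precisely the conclusion.

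Next I would dispatch the case $|\Gamma_E(R)|=3$. By Corollary \ref{three}, $\Gamma_E(R)$ is then the path on three vertices, so the only vertex adjacent to a leaf is the middle vertex $[y]$, and $\deg[y]=2$ strictly exceeds the degree of either endpoint. Proposition \ref{>} then applies to $[y]$ and yields directly that $\ann(y)$ is maximal in $\mathfrak{F}$, hence an associated prime. (One can alternatively avoid Proposition \ref{>} here: a maximal element of $\mathfrak{F}$ is an associated prime, the two endpoints cannot both be associated primes by Lemma \ref{assedge}, and tracing the resulting adjacency again pins the maximal associated prime to the middle vertex.)

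I do not expect a genuine obstacle, since no new ring-theoretic computation is required beyond the cited results. The only point needing care is the bookkeeping: verifying that the two size regimes together cover all rings with $|\Gamma_E(R)|\ge 3$, and confirming in the three-vertex case that ``the vertex with a leaf'' is in fact the degree-two vertex to which Proposition \ref{>} applies — which follows because a leaf's neighbor cannot itself be a leaf once $\Gamma_E(R)$ is connected with more than two vertices (Proposition \ref{connected}), or simply from the explicit shape of the graph in Corollary \ref{three}. The substance is entirely in combining Proposition \ref{leaf} with Lemma \ref{assedge} for the main case and recognizing the strict degree inequality in the exceptional one.
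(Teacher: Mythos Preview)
Your proof is correct and follows essentially the same route as the paper's: the case $|\Gamma_E(R)|=3$ is handled via Corollary~\ref{three} and Proposition~\ref{>}, and the case $|\Gamma_E(R)|>3$ by combining Proposition~\ref{leaf} with Lemma~\ref{assedge}. The paper's proof is terser (it merely names the ingredients), but your expanded version fills in exactly the intended details.
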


\begin{proof}  Proposition \ref{>} and Corollary \ref{three} take care of the case when there are exactly three vertices.  Suppose $|\Gamma_E(R)| > 3$.  Since every graph is connected, the result follows from Lemma \ref{assedge} and Proposition \ref{leaf}.
\end{proof}

We are now in a position to show a finer relationship between vertices of high degree and associated primes.

\begin{proposition}  \label{degrees} Let $x_1, \dots, x_r$ be elements of $R$, with $r \geq 2$, and suppose $\ann(x_1) \subsetneq \cdots \subsetneq \ann(x_r)$ is a chain in $\Ass(R)$.  If $3 \leq |\Gamma_E(R)| < \infty$, then $\deg[x_1]  < \cdots < \deg[x_r]$.
\end{proposition}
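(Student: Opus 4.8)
The plan is to reduce to the single-step claim $\deg[x_i]<\deg[x_{i+1}]$ for each $1\le i<r$ (the full chain of strict inequalities follows at once). Fix such an $i$ and write $\p=\ann(x_i)$, $\mathfrak q=\ann(x_{i+1})$, two primes of $\Ass(R)$ with $\p\subsetneq\mathfrak q$; by Lemma~\ref{assedge} the vertices $[x_i]$ and $[x_{i+1}]$ are adjacent. For any vertex $[v]$ one has $[v]\in N([x_i])$ iff $0\neq v\in\p$ and $[v]\neq[x_i]$, so $N([x_i])$ is the set of classes meeting $\p\setminus\{0\}$, with $[x_i]$ deleted exactly when $x_i^2=0$; similarly for $[x_{i+1}]$ with $\mathfrak q$. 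Since $\p\subsetneq\mathfrak q$, the first step is the elementary inclusion $N([x_i])\setminus\{[x_{i+1}]\}\subseteq N([x_{i+1}])\setminus\{[x_i]\}$ (essentially Proposition~\ref{contain}), which together with $[x_{i+1}]\in N([x_i])$ and $[x_i]\in N([x_{i+1}])$ re-proves $\deg[x_i]\le\deg[x_{i+1}]$; strictness is the point.

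For strictness I would pick $u\in\mathfrak q\setminus\p$ with $u\neq0$, so that $u$ is a zero divisor with $ux_{i+1}=0$ and $ux_i\neq0$; hence $[u]\in N([x_{i+1}])$ while $[u]\notin N([x_i])$. If $[u]\neq[x_i]$ we are done: $[u]$ witnesses a strict inclusion of the finite sets above, giving $\deg[x_i]<\deg[x_{i+1}]$. One should also observe $[u]\neq[x_{i+1}]$: if $[u]=[x_{i+1}]$ then $u^2=0$ forces $x_{i+1}^2=0$, hence $x_{i+1}\in\p$ by Lemma~\ref{assedge}, so $[x_{i+1}]$ is a class meeting $\p\setminus\{0\}$, contradicting $[u]\notin N([x_i])$. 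Thus the only remaining possibility is that \emph{every} $u\in\mathfrak q\setminus\p$ satisfies $\ann(u)=\p$, i.e.\ $\mathfrak q=\p\sqcup[x_i]$ as a set.

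The main obstacle is exactly this degenerate case, and it is where the hypothesis $3\le|\Gamma_E(R)|<\infty$ must enter. Here $x_i^2\neq0$ is forced (else $x_i\notin\mathfrak q\setminus\p$ and one checks $\mathfrak q\setminus\p$ is empty), so that $x_i\in\mathfrak q\setminus\p=[x_i]$; moreover $x_{i+1}^2=0$. Now for $a\in\p$ and $y\in[x_i]$ one computes $(a+y)x_i=yx_i\neq0$, so $a+y\in\mathfrak q\setminus\p=[x_i]$, and comparing annihilators gives $\p=\ann(y)\subseteq\ann(a)$; since $a\in\p$ was arbitrary, $\p^2=0$ and $\p=\sqrt0$. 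In this situation a naive count only yields $\deg[x_i]\le\deg[x_{i+1}]$, because the single new neighbor $[x_i]$ of $[x_{i+1}]$ is exactly cancelled by the deletion of $[x_{i+1}]$ from $N([x_{i+1}])$ forced by $x_{i+1}^2=0$. To recover strictness I would argue that under $\mathfrak q=\p\sqcup[x_i]$ with $\p^2=0$ the graph $\Gamma_E(R)$ degenerates: every zero divisor lies in $\mathfrak q$, and the classes meeting $\p\setminus\{0\}$ reduce to the single class $[x_{i+1}]$, so $\Gamma_E(R)$ would have only the two vertices $[x_i],[x_{i+1}]$, contradicting $|\Gamma_E(R)|\ge3$. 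Carrying out this last reduction carefully — converting the ideal-theoretic picture $\mathfrak q=\p\sqcup[x_i]$, $\p^2=0$ into a collapse of the whole vertex set, using Noetherianness and finiteness to control the remaining associated primes — is the delicate point that I expect to require the most care.
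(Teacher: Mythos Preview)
Your approach differs from the paper's in a meaningful way, and the difference is exactly where your sketch runs out of steam.

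The paper does \emph{not} try to collapse the graph in the hard case. Instead it first disposes of $|\Gamma_E(R)|=3$ separately, then for $|\Gamma_E(R)|>3$ invokes Proposition~\ref{leaf} (no associated prime is a leaf) to guarantee $\deg[x_1]\ge 2$, so there is at least one neighbour $[u_1]$ of $[x_1]$ other than $[x_2]$. The paper then splits on whether $x_1^2=0$. When $x_1^2=0$ it chooses $z\in\ann(x_2)\setminus\ann(x_1)$ --- your $u$ --- and checks $[z]\neq[x_1],[x_2]$ directly from $x_1\notin\ann(z)$ while $x_1\in\ann(x_1)\cap\ann(x_2)$; that is your non-degenerate case with no further work needed. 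When $x_1^2\neq 0$ (which subsumes your degenerate situation) the paper does \emph{not} look inside $\mathfrak q\setminus\p$ at all: it manufactures the extra neighbour as $[x_1+u_1]$, noting $x_2(x_1+u_1)=0$ but $x_1(x_1+u_1)=x_1^2\neq 0$. The existence of $u_1$ is exactly what Proposition~\ref{leaf} provides, and you never use that proposition.

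Your proposed endgame --- from $\mathfrak q=\p\sqcup[x_i]$ and $\p^2=0$ deduce $Z(R)\subseteq\mathfrak q$ and that $\p\setminus\{0\}$ is a single class, forcing $|\Gamma_E(R)|=2$ --- is not established, and I do not see that it follows. Your facts do give $\p=\sqrt{0}$ and $\mathfrak q\subseteq\ann(a)$ for every $0\neq a\in\p$, so every associated prime other than $\p$ must \emph{contain} $\mathfrak q$; but nothing yet prevents some $\mathfrak r=\ann(w)\supsetneq\mathfrak q$ with $w\in\p$, which would produce zero divisors $t\in\mathfrak r\setminus\mathfrak q$ (hence vertices outside $\{[x_i],[x_{i+1}]\}$) and several distinct classes inside $\p$. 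So the ``collapse to two vertices'' plan, as written, has a real gap. The paper's detour through Proposition~\ref{leaf} and the element $x_1+u_1$ is precisely how the authors avoid having to control the global structure of $\Ass(R)$ in this case.
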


\begin{proof}
If $|\Gamma_E(R)| = 3$, then $|\Ass(R)| \leq 2$ by Lemma \ref{assedge} and Proposition \ref{incomplete}, and the result follows immediately from Corollaries \ref{three} and \ref{>}.  Thus, we may assume that $|\Gamma_E(R)| > 3$.  By Proposition \ref{fan}, $\Gamma_E(R)$ is not a fan graph, given the hypotheses.  By the preceding proposition, the degrees of $[x_1]$ and $[x_2]$ are at least two.
Suppose $\deg[x_1] = n.$  Then there are $n-1$ vertices $[u_i]$, distinct from either $[x_1]$ or $[x_2]$, such that $x_1u_i=0$ for all $i$.  Since $\ann(x_1) \subsetneq \ann(x_2)$, $x_2u_i = 0$ for all $i$ as well.  Furthermore, $x_2 \in \ann(x_1)$, hence $x_2^2 = 0$.   If $x_1^2 \neq 0$, then each $\ann(x_1 + u_i)$ contains $x_2$ but not $x_1$, making $\deg[x_2] > \deg[x_1]$.  If $x_1^2 = 0$, then take $z \in \ann(x_2) \backslash \ann(x_1)$.  Since $x_1 \notin \ann(z)$, $[z]$ is distinct from $[x_1]$ and $[x_2]$ and connected to $[x_2]$, but not $[x_1]$.  Again, $\deg[x_1] < \deg[x_2]$.  Moreover, this last argument applies to each pair $\ann(x_i)$, $\ann(x_{i+1})$, for $i \geq 1$, since $x_ix_j=0$ for $1 \leq i \leq j \leq r$.
\end{proof}

\begin{example} \label{introex} Let $R=\mathbb Z[X]/(X^3, XY)$ and let $x$ and $y$ denote the images of $X$ and $Y$, respectively.  Then $(x) \subsetneq (x,y)$ is a chain in $\Ass(R)$, corresponding to the vertices $[y]$ and $[x^2]$, respectively, with $\deg[y] = 2$ and $\deg[x^2] = 3$.

\xymatrix{
   & & & & & [x^2] \ar@{-}[dl] \ar@{-}[dr] \ar@{-}[d] \\
     & & & & [x] \ar@{-}[r] & [y] & [x+y] & & \Gamma_E(R)}
\end{example}

We now make use of Proposition \ref{nonregular} to establish a correspondence between associated primes and vertices of relatively large degree.

\begin{theorem}  \label{allmax} Let $R$ be such that $2 < |\Gamma_E(R)| < \infty$.  Then any vertex of maximal degree is maximal in $\mathfrak F$ and hence is an associated prime.
\end{theorem}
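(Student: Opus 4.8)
The plan is to argue by contradiction: suppose $[y]$ is a vertex of maximal degree that is \emph{not} maximal in $\mathfrak F$. By Lemma \ref{assedge}, $[y]$ is then adjacent to some associated prime $[p]$ maximal in $\mathfrak F$, and by Proposition \ref{contain}, applied to the proper containment $\ann(y) \subsetneq \ann(p)$ forced by primality (since $y$ is a zero divisor, $\ann(y)$ sits inside some maximal element of $\mathfrak F$, and it cannot equal that prime if $[y]$ is not itself an associated prime maximal in $\mathfrak F$), we get $\deg[y] \leq \deg[p]$. Maximality of $\deg[y]$ then forces $\deg[p] = \deg[y]$. The first key step is to leverage this equality of degrees together with the containment $\ann(y) \subsetneq \ann(p)$ to extract structural information: every vertex adjacent to $[y]$ is adjacent to $[p]$, so $N([y]) \subseteq N([p])$, and since both neighborhoods are finite of the same size, $N([y]) = N([p])$.

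The next step is to play $N([y]) = N([p])$ against the existence of an element $z \in \ann(p) \setminus \ann(y)$. The vertex $[z]$ (or rather the class of some such $z$) is adjacent to $[p]$ but, if $z \notin \ann(y)$, it fails to be adjacent to $[y]$ — contradicting $N([y]) = N([p])$ — \emph{unless} $[z] = [y]$ itself, i.e. $z \sim y$. As in the proof of Proposition \ref{bipartite}, this pins us down: the only way out is $y \in \ann(p)$ with $y \sim$ (a generator yielding) the difference, which ultimately forces $y^2 = 0$ and more precisely that the failure of adjacency can only come from $z$ lying in the class $[y]$. I would then run the same alternative analysis on $[p]$: is $p^2 = 0$ or not? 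If $p^2 \neq 0$, then $[y + p]$ (using $y \in \ann(p)$) is adjacent to $[p]$ but not to $[y]$, again contradicting the equality of neighborhoods. If $p^2 = 0$ as well, then $[y+p]$ is adjacent to \emph{both}, and one checks this produces a vertex outside $N([y]) = N([p])$ that is adjacent to $[p]$, or else forces the graph to be too small — the same style of contradiction used throughout Section 1.

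To close the argument cleanly I expect to invoke the machinery already assembled: the neighborhood characterization behind Lemma \ref{neighborhood_property} and the regularity argument of Proposition \ref{nonregular}. Indeed, if \emph{every} vertex of maximal degree failed to be maximal in $\mathfrak F$, one could try to show the graph is regular, contradicting Proposition \ref{nonregular} since $|\Gamma_E(R)| > 2$; alternatively one shows directly that two distinct vertices are non-adjacent iff they have equal neighborhoods (using that any non-maximal vertex has strictly smaller degree than the maximal one it attaches to, via Proposition \ref{degrees} applied along a chain in $\Ass(R)$), whence Lemma \ref{neighborhood_property} and Proposition \ref{bipartite} force $\Gamma_E(R) = K_{n,1}$, a fan graph — but then Proposition \ref{fan} and Proposition \ref{>} already identify the maximal-degree vertex as the unique associated prime, contradiction. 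The main obstacle I anticipate is the bookkeeping in the case $p^2 = 0$: ensuring that the new vertices $[y+p]$, $[z]$, etc., are genuinely distinct from the ones already named and that they violate $N([y]) = N([p])$ rather than sneaking back in as an existing class. This is exactly the kind of case-chasing done in Propositions \ref{bipartite} and \ref{degrees}, so the techniques are in hand; it is just a matter of assembling them without gaps.
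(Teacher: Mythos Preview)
Your overall strategy matches the paper's: assume $[y]$ has maximal degree $d$ but $\ann(y)\subsetneq\ann(p)$ for some $[p]$ maximal in $\mathfrak F$, deduce $\deg[p]=d$, compare neighborhoods, pick $z\in\ann(p)\setminus\ann(y)$, and split into cases on $[z]$. However, the execution has real errors. First, from $\ann(y)\subseteq\ann(p)$ and equal degrees you only get $N([y])\setminus\{[p]\}=N([p])\setminus\{[y]\}$, not $N([y])=N([p])$; whether $[y]$ and $[p]$ are themselves adjacent is precisely part of what the case analysis must determine. Second, your case split is incomplete: you dismiss the possibility $[z]=[p]$ without comment, but this case (equivalently $p^2=0$ with $yp\neq 0$) is genuine and is handled separately in the paper by showing that the common neighborhood $\mathcal N$ becomes a complete subgraph, forcing degrees there to exceed $d$. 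Third, in the case $[z]=[y]$ you conclude $y^2=0$, but this is backwards: $z\notin\ann(y)=\ann(z)$ gives $z^2\neq 0$, hence $y^2\neq 0$ (and $yp=0$). Your subsequent computation ``$[y+p]$ adjacent to $[p]$ when $p^2\neq 0$'' then fails, since $(y+p)p=p^2\neq 0$.

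The case $[z]=[y]$ is in fact the hard one, and your sketch does not reach the mechanism the paper uses. There one shows every $[x]\in\mathcal N$ also has degree $d$ and satisfies $[x+y]=[y]$, forcing $\mathcal N\cup\{[y],[p]\}$ to induce a complete subgraph on $d+1$ vertices. Proposition~\ref{nonregular} is then invoked not to make the whole graph regular or $r$-partite, but simply to produce a vertex $[w]$ of degree $<d$ outside this clique; since all associated primes lie in the clique, Lemma~\ref{assedge} forces $[w]$ to be adjacent to some clique vertex $[v]$, whence $\deg[v]\geq d+1$, the desired contradiction. Your alternative plan of reducing to $K_{n,1}$ via Lemma~\ref{neighborhood_property} would require showing \emph{every} pair of non-adjacent vertices shares a neighborhood, which does not follow from the hypotheses.
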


\begin{proof}  Let $d$ denote the maximal degree of $\Gamma_E(R)$.  If there is only one vertex of degree $d$, then Proposition \ref{>} yields the desired result.  Therefore, we may assume that $|\Gamma_E(R)| > 3$ and that $\Gamma_E(R)$ has at least two vertices of degree $d$.  Suppose that $[y_1]$ is a vertex of degree $d$.  Then $\ann(y_1) \subseteq \ann(y_2)$ for some $y_2$ such that $\ann(y_2)$ is maximal in $\frak F$, and so $[y_2]$ is an associated prime.  We aim to show that, in fact, $[y_1] = [y_2]$.  By Proposition \ref{contain}, $d=\deg[y_1]\leq \deg[y_2]\leq d$, so $\deg [y_2] = \deg [y_1]$.  Since $\ann(y_1) \subseteq \ann(y_2)$ yields $(N([y_1])\setminus \{[y_2]\})\subseteq (N([y_2])\setminus \{[y_1]\})$, the equality of degrees implies that $(N([y_1])\setminus \{[y_2]\})= (N([y_2])\setminus \{[y_1]\})$; denote this set by $\mathcal N$.  The connectivity of $\Gamma_E(R)$ implies that $d\geq 2$ and so $\mathcal N \neq \varnothing$.  

To get the desired result, we must show that $\ann(y_1) = \ann(y_2)$, that is, $\ann(y_2)\setminus \ann(y_1) = \varnothing$.  Suppose this is not the case and let $z \in \ann(y_2) \backslash \ann(y_1)$.  If $[z]$ is distinct from $[y_2]$ and $[y_1]$, then $[z]\in (N([y_2])\setminus \{[y_1]\})$ but $[z]\notin (N([y_1])\setminus \{[y_2]\})$, a contradiction.   Suppose $[z] = [y_2]$, in which case $y_2y_1 \neq 0$, $y_2^2 = 0$, and $y_1^2 \neq 0$.  Let $[w] \in \mathcal N$ and consider $w+y_2$, which is annihilated by $y_2$, but not $y_1$.  Then $[w+y_2] \notin \mathcal N$ and thus $[w+y_2]=[y_2]$.  So if $[v] \in \mathcal N$, then $0=(w+y_2)v=wv$.  Since $[w]$ was chosen arbitrarily, it follows that the vertices in $\mathcal N$ form a complete subgraph.  Hence $(\mathcal N \setminus \{[w]\}) \cup \{[y_2],[y_1]\} \subseteq N([w])$ for any $[w] \in \mathcal N$, implying that $\deg [w] \geq \deg [y_2]+1$, contradicting the maximality of $\deg [y_2]$.

Suppose $[z] = [y_1]$ and let $[x] \in \mathcal N$.  If $\deg [x] < d$, then there exists some $u \in \ann(y_1) \setminus \ann(x)$ such that $[u]$, $[y_1]$, and $[x]$ are distinct.  Note that $x(u+y_1)=xu \neq 0$, so $[u+y_1] \neq [y_i]$ for $i=1,2$.  However, $y_2(u+y_1) = 0$ which implies $[u+y_1] \in \mathcal N$, but $y_1(u+y_1) = y_1^2 \neq 0$, contradicting that $[u+y_1] \in \mathcal N$.  Therefore $\deg [x] = d$.  Furthermore, since $y_2(x+y_1) = 0$ but $y_1(x+y_1) = y_1^2 \neq 0$, then $[x+y_1] \in N([y_2])\setminus \mathcal N$, hence $[x+y_1] = [y_1]$.  Since $[x]$ was chosen arbitrarily in $\mathcal N$, it follows that $x_1x_2=0$ for distinct $[x_1],[x_2] \in \mathcal N$.  Therefore, the subgraph induced by $\mathcal N \cup \{[y_2],[y_1]\}$ is a complete graph.  By Proposition \ref{nonregular} there exists a vertex $[w]\in \Gamma_E(R)$ such that $\deg[w]<\deg[y_1]$, so $[w] \notin \mathcal N \cup \{[y_2],[y_1]\}$.  However, since $[y_2]$ is an associated prime, then by Lemma \ref{assedge}, $\mathrm{Ass}(R) \subseteq \mathcal N \cup \{[y_2],[y_1]\}$ and $[w]$ is adjacent to some vertex $[v] \in \mathcal N \cup \{[y_2],[y_1]\}$.  This implies that $\deg [v] > d$, a contradiction.
\end{proof}

\begin{example}  Recall that $\Gamma_E(\mathbb Z_4 \times \mathbb Z_4)$ in Example \ref{house} had two vertices, namely $[(2,0)]$ and $[(0,2)]$, of maximal degree.  Their annihilators are the associated primes of the ring.
\end{example}

\begin{remark}  We collect some comments regarding Theorem \ref{allmax}:
\begin{enumerate}
\item The converse is false:  For example, in $\mathbb Z_{p^2q^3}$, $[pq^3]$ and $[p^2q^2]$ correspond to associated primes $(p)$ and $(q)$, respectively, each maximal in $\mathfrak F$, but $\deg[pq^3] = 6$ and $\deg[p^2q^2] = 7$.

\item The assumption of finiteness is necessary.  The graph of the ring $R = \mathbb Z[X,Y]/(X^2, Y^2, Z^2)$ has four vertices with infinite degree, namely $[xy], [xz],$ $[yz]$, and $[xyz]$, but only $\ann(xyz)$ is an associated prime of $R$.
\end{enumerate}
\end{remark}

Finally, since each of our examples possesses a leaf, one might ask if Corollary \ref{leafID}  supersedes  Theorem \ref{allmax}; that is, if every graph must contain a leaf.  Based on the results in section one, the first case where such an example can occur is when the graph has exactly five vertices.

\begin{example}  Let $R = \mathbb Z_3[X,Y]/(XY, X^3, Y^3, X^2-Y^2)$, and let $x$ and $y$ denote the images of $X$ and $Y$ in $R$, respectively.

\xymatrix{
  & & & [x] \ar@{-}[dr] \ar@{-}[rr] & & [y] \ar@{-}[dl] \\
     & & & & [x^2] \ar@{-}[dr] \ar@{-}[dl] & & & \Gamma_E(R) \\
     & & & [x+y] \ar@{-}[rr] & & [x+2y]}
\end{example}

\end{document}